\newif\ifusepgfplot
\pgfplotsset{compat=1.16}
\DeclareMathAlphabet{\mathdutchcal}{U}{dutchcal}{m}{n}
\DeclareMathOperator{\Real}{Re}
\DeclareMathOperator{\Imag}{Im}
\newtheorem{assumption}{Assumption}
\newcommand{\method}[3]{#1\text{DRK}#2\text{-}#3}
\newcommand{\methodcat}[3]{#1\text{DRKCAT}#2\text{-}#3}
\newcommand{\refEqual}[1]
{
	\mathrel{\overset{\makebox[0pt]{\mbox{\normalfont\tiny\sffamily #1}}}{=}}
}
\newcommand{\logLogSlopeTriangle}[6]
{

    \pgfplotsextra
    {
        \pgfkeysgetvalue{/pgfplots/xmin}{\xmin}
        \pgfkeysgetvalue{/pgfplots/xmax}{\xmax}
        \pgfkeysgetvalue{/pgfplots/ymin}{\ymin}
        \pgfkeysgetvalue{/pgfplots/ymax}{\ymax}

        \pgfmathsetmacro{\xArel}{#1}
        \pgfmathsetmacro{\yArel}{#3}
        \pgfmathsetmacro{\xBrel}{#1-(#5*#2)}
        \pgfmathsetmacro{\yBrel}{\yArel}
        \pgfmathsetmacro{\xCrel}{\xArel}

        \pgfmathsetmacro{\lnxB}{\xmin*(1-(#1-(#5*#2)))+\xmax*(#1-(#5*#2))} 
        \pgfmathsetmacro{\lnxA}{\xmin*(1-#1)+\xmax*#1} 
        \pgfmathsetmacro{\lnyA}{\ymin*(1-#3)+\ymax*#3} 
        \pgfmathsetmacro{\lnyC}{\lnyA+#4*(\lnxA-\lnxB)}
        \pgfmathsetmacro{\yCrel}{\lnyC-\ymin)/(\ymax-\ymin)}

        \coordinate (A) at (rel axis cs:\xArel,\yArel);
        \coordinate (B) at (rel axis cs:\xBrel,\yBrel);
        \coordinate (C) at (rel axis cs:\xCrel,\yCrel);

		\ifnum #5=1
			\def\loc{west};
		\else
			\def\loc{east};
		\fi
			
        \draw[#6]   (A)-- node[pos=0.5,anchor=north] {}
                    (B)-- 
                    (C)-- node[pos=0.5,anchor=\loc] {#4}
                    cycle;
    }
}
\definecolor{grassGreen}{RGB}{26,224,72}
\definecolor{mintGreen}{RGB}{73,186,142}
\definecolor{lightViolet}{RGB}{238,70,238}
\begin{document}

\title{Jacobian-free explicit multiderivative Runge-Kutta methods for hyperbolic conservation laws}


\titlerunning{Jacobian-free explicit MDRK methods for hyperbolic conservation laws}        

\author{Jeremy Chouchoulis  \and Jochen Sch\"utz  \and Jonas Zeifang }

\authorrunning{J. Chouchoulis \and J. Sch\"utz \and J. Zeifang} 

\institute{Jeremy Chouchoulis \orcidlink{0000-0003-3451-2568}, Jochen Schuetz \orcidlink{0000-0002-6355-9130}, Jonas Zeifang \orcidlink{0000-0002-7388-923X} \\
		\email{firstname.lastname@uhasselt.be} \\
 \at Hasselt University, Faculty of Sciences \& Data Science Institute, Agoralaan Gebouw D, BE-3590 Diepenbeek, Belgium}

\date{Received: date / Accepted: date}

\maketitle

\begin{abstract}
Based on the recent development of Jacobian-free Lax-Wendroff (LW) approaches for solving hyperbolic conservation laws [Zorio, Baeza and Mulet, Journal of Scientific Computing 71:246-273, 2017], [Carrillo and Par\'es, Journal of Scientific Computing 80:1832-1866, 2019], a novel collection of explicit Jacobian-free multistage multiderivative solvers for hyperbolic conservation laws is presented in this work. 
In contrast to Taylor time-integration methods, multiderivative Runge-Kutta (MDRK) techniques achieve higher-order of consistency not only through the excessive addition of higher temporal derivatives, but also through the addition of Runge-Kutta-type stages. This adds more flexibility to the time integration in such a way that more stable and more efficient schemes could be identified. 
The novel method permits {the practical application} of MDRK schemes. In their original form, they are difficult to {utilize} as higher-order flux derivatives have to be computed analytically. Here we overcome this by {adopting} a Jacobian-free approximation of those derivatives.
In this paper, we analyze the novel method with respect to order of consistency and stability. We show that the linear CFL number varies significantly with the number of derivatives used. Results are verified numerically on several representative testcases.

\keywords{Hyperbolic conservation laws \and Multiderivative Runge-Kutta \and Lax-Wendroff \and Finite differences}
\end{abstract}

\subclass{65M06 \and 65M08 \and 65M12 \and 35L65}

\section{Introduction}\label{sec:intro}
In this work, we present a novel discretization method for the numerical approximation of one-dimensional hyperbolic conservation laws on domain $\Omega \subset \mathbb R$,
\begin{align}\label{eq:1DConsLaw}
w_t + {f}({w})_x &= 0 \, , \qquad \qquad \text{on} \quad (x,t) \in \Omega \times (0,T_{\text{end}}] \, , \\
{w}(x,0) &= {w}_0(x) \, . \notag
\end{align}
Our primary interest is on temporal integration. In recent years, there has been quite some progress on the further development of the \emph{multiderivative} paradigm for temporal integration, see, e.g.,  \cite{TC10,Seal2015b,2021_Gottlieb_EtAl,SSJ2017,Seal13} and the references therein. Assume that one is given a scalar ODE, e.g., 
\begin{align}\label{eq:ODEsystem}
{y}'(t) &= \Phi({y})
\end{align}
for some flux function $\Phi$. Multiderivative schemes make use of not only $\Phi$, but also of the quantities $y''(t) \equiv\Phi'(y)\Phi$, $y'''(t) \equiv \ldots$ and so on. Using this approach, one can derive stable, high-order and storage-efficient schemes very easily \cite{SealSchuetzZeifang21}. This can be extended to partial differential equations (PDEs) with a time-component, such as Eq.~\eqref{eq:1DConsLaw}, depending on the method either directly through the method of lines-discretization \cite{SSJ2017} or through a Lax-Wendroff procedure, see, e.g., \cite{CarrilloPares2019,CarrilloParesZorio2021,LaxWend1960,LiDu2016,Qiu08,ZorioEtAl}. The Lax-Wendroff method expresses temporal derivatives of the unknown function $w$ in terms of the fluxes through the Cauchy-Kowalevskaya procedure. 
As an example, we consider -- for simplicity given that $f$ is scalar -- the second time-derivative of $w$. Due to Eq.~\eqref{eq:1DConsLaw}, there holds 
\begin{align}
 \label{eq:wttoriginal}
 w_{tt} = -(f(w)_x)_t = -(f(w)_t)_x,
\end{align}
and 
\begin{align*}
 f(w)_t = f'(w) w_t = - f'(w) f(w)_x;
\end{align*}
hence 
\begin{align}
  \label{eq:wtt}
  w_{tt} = \left(f'(w) f(w)_x\right)_{x} = 2f'(w)f''(w)w_x^2 + f'(w)^2 w_{xx}. 
\end{align}
Already at this stage, one can see that this approach is very tedious as it necessitates highly complex symbolic calculations.

Still, the potential LW-methods bear is very well recognized among researchers. Over the last two decades, plenty of authors have put effort into developing high-order variants of the LW-method for nonlinear systems. Particularly the ADER (Arbitrary order using DERivatives) methods, see, e.g.,  \cite{PnPm0,2008_Dumbser_Enaux_Toro,axioms7030063,Schwartzkopff2002,Titarev2002,2005_Titarev_Toro} and the references therein, gained a lot of interest. Also, higher-order extensions of the LW-method using WENO and discontinuous Galerkin (DG) reconstructions were investigated \cite{GuoQiuQiu15,MultiDerHDG2015,2011_Lu_Qiu,Qiu08,Qiu2005,QiuShu03}. 

Our essential intent of this paper is to make explicit multistage multiderivative solvers more accessible as a means to solve PDEs. Although such solvers have been theoretically studied since the early 1940's (see \cite{Seal13} for an extensive review), the schemes have not been put much to practice, which is most likely due to the necessary cumbersome calculation of flux derivatives. 
In \cite{CarrilloPares2019}, Carrillo and Par\'es have, based on the earlier work~\cite{ZorioEtAl}, developed the \emph{compact approximate Taylor} (CAT) method to circumvent having to symbolically compute flux derivatives. {By means of an automatic procedure, the higher-order temporal derivatives of $w$, such as in Eq.~\eqref{eq:wtt}, are approximated.} Their work is based on Taylor methods, i.e., time integration is given by 
\begin{equation*}\label{eq:TaylorExpansionTime}
{w}(x,t^{n+1}) = {w}(x,t^{n}) + \sum\limits_{k=1}^{\mathtt{r}} \frac{{\Delta t}^{k}}{k!}\partial^{k}_{t} {w}(x,t^{n}) + \mathcal{O}({\Delta t}^\mathtt{r+1}) \, .
\end{equation*}
In this work, we extend their approach to more general multiderivative integration methods, more precisely, to multiderivative Runge-Kutta (MDRK) methods.

The paper is structured in the following manner: In Sect.~\ref{sec:MDRK} multiderivative Runge-Kutta (MDRK) time integrators for ODEs are introduced, given that they form the central mechanism of this work. Thereafter, in Sect.~\ref{sec:MDRKConsLaw} we shortly revisit the Jacobian-free approach of the CAT method and introduce the explicit Jacobian-free MDRK solver for hyperbolic conservation laws, termed MDRKCAT. After describing the numerical scheme, in Sect.~\ref{sec:consistency} we prove consistency, and in Sect.~\ref{sec:vonNeumann} analyze linear stability. Via several numerical cases we verify and expand on the theoretical results in Sect.~\ref{sec:numres}. At last, we draw our conclusions and discuss future perspectives in Sect.~\ref{sec:conclusion}.

\section{Explicit multiderivative Runge-Kutta solvers}\label{sec:MDRK}
We start by considering the system of ODEs defined by Eq.~\eqref{eq:ODEsystem} in which $\Phi$ is a function of the solution variable ${y} \in \mathbb{R}^{\mathtt{m}}$. In order to apply a time-marching scheme, we discretize the temporal domain with a fixed timestep $\Delta t$ by iterating $N$ amount of steps such that $\Delta t = T_\text{end} / N$. Consequently, we define the time levels by
\begin{equation*}\label{eq:time-discr}
t^n := n\Delta t \, \qquad 0 \leq n \leq N.
\end{equation*}
\begin{remark}\label{Rm:timestep}
 Note that, although the fully space-time-discrete algorithm (Alg.~\ref{alg:MDRKCAT}) \emph{seems} to have a multistep flavour, this is ultimately not the case. It is therefore of no necessity to consider a uniform timestep, which is also demonstrated numerically in Sect.~\ref{sec:numres}. 
\end{remark}
\newcommand{\dotPhi}[1]{\frac{\mathrm d^{#1}}{\mathrm dt^{#1}}{\Phi}}
\newcommand{\dotdPhi}[1]{\frac{\mathrm d^{#1}}{\mathrm dt^{#1}}{\Phi'}}
The central class of time integrators in this work are \emph{explicit} multiderivative Runge-Kutta (MDRK) methods. These form a natural generalization of classical explicit Runge-Kutta methods by adding extra temporal derivatives of $\Phi(w)$. {The additional} time derivatives can be recursively calculated via the chain rule, there holds 
\begin{equation*}\label{eq:MDRK-timeDerTensors}
\dotPhi{k}\left({y}\right) = \frac{\mathrm d^{k-1}}{\mathrm dt^{k-1}}\left(\Phi'(y) \dot y\right) = 
\frac{\mathrm d^{k-1}}{\mathrm dt^{k-1}}\left(\Phi'(y) \Phi(y)\right).
\end{equation*}
For a more detailed description, we refer to \cite{Seal13}.  To present our ideas, let us formally define the MDRK scheme as follows:
\begin{definition}[{\cite[Def. 2]{Seal13}}]\label{def:MDRK-scheme}
An explicit $q$-th order accurate $\mathtt{r}$-derivative Runge-Kutta scheme using $\mathtt{s}$ stages ($\method{\mathtt{r}}{q}{\mathtt{s}}$) is any method which can be formalized as
\begin{subequations}
\begin{equation*}\label{eq:MDRK-ODE-stages}
\hspace{6.4em} {y}^{n,{\color{BlueGreen}l}} := {y}^n + \sum\limits_{k=1}^{\mathtt{r}} {\Delta t}^{k} \sum\limits_{{\color{VioletRed}\nu}=1}^{{\color{BlueGreen}l}-1} a_{{\color{BlueGreen}l}{\color{VioletRed}\nu}}^{(k)}\dotPhi{k-1}\left({y}^{n,{\color{VioletRed}\nu}} \right) \quad {\color{BlueGreen}l} = 1,\dots,\mathtt{s},
\end{equation*}
where ${y}^{n,{\color{BlueGreen}l}}$ is a stage approximation at time $t^{n,{\color{BlueGreen}l}} := t^n + c_{\color{BlueGreen}l} \Delta t$. The update is given by
\begin{equation*}\label{eq:MDRK-ODE-update}
{y}^{n+1} := {y}^n + \sum\limits_{k=1}^{\mathtt{r}} {\Delta t}^{k} \sum\limits_{{\color{BlueGreen}l}=1}^{\mathtt{s}} b^{(k)}_{\color{BlueGreen}l}\dotPhi{k-1}({y}^{n,{\color{BlueGreen}l}}) \, .
\end{equation*}
\end{subequations}
The given coefficients $a_{{\color{BlueGreen}l}{\color{VioletRed}\nu}}^{(k)}$ and $b^{(k)}_{\color{BlueGreen}l}$ determine the scheme; they are typically summarized in an extended Butcher tableau. 
\end{definition}
\begin{remark}
 Note that standard Taylor methods can be cast in the framework of Def.~\ref{def:MDRK-scheme} through setting $\mathtt{s}=1$, $a_{11}^{(k)}=0$ and $b_1^{(k)} = 1/k!$ $(k=1,\dots,\mathtt{r})$.
 The multiderivative Runge-Kutta schemes used in this work can be found through their extended Butcher tableaux in Appendix~\ref{app:ButherTableaux}. 
\end{remark}

\begin{remark} 
 The stability regions of the used Runge-Kutta methods are visualized in Fig.~\ref{fig:stability-regions-MDRK}, see \cite{TC10,HaiWan,TurTur2017} for more details. Note that except for $\method{3}{5}{2}$ and $\method{4}{6}{2}$, all schemes contain parts of the imaginary axis.
\end{remark}

\begin{center}
\begin{figure}[h!]
\begin{minipage}{0.55\textwidth}
  \centering
	\ifthenelse{\boolean{compilefromscratch}}{
        \tikzsetnextfilename{stability-regions-MDRK}        
		\begin{tikzpicture}
		\begin{axis}[
			width=\linewidth,
			height=6.7cm,
			xlabel={$\Real(z)$},
			xtick distance=1,
			ylabel={$\Imag(z)$},
			ytick distance=2,
			no markers,
			grid=major,
			major grid style = dotted
			]
					
			\addplot[lightViolet, fill=lightViolet!25!white,fill opacity=0.4] table[
					x expr = {\thisrowno{0}},
					y expr = {\thisrowno{3}}
				] {data/stability_domain_FDRK6-2s.dat};		
			\addplot[blue, fill=blue!25!white,fill opacity=0.4] table[
					x expr = {\thisrowno{0}},
					y expr = {\thisrowno{3}}
				] {data/stability_domain_ThDRK7-3s.dat};
			\addplot[mintGreen, fill=mintGreen!25!white,fill opacity=0.4] table[
					x expr = {\thisrowno{0}},
					y expr = {\thisrowno{3}}
				] {data/stability_domain_ThDRK5-2s.dat};
			\addplot[grassGreen, fill=grassGreen!25!white,fill opacity=0.4] table[
					x expr = {\thisrowno{0}},
					y expr = {\thisrowno{3}}
				] {data/stability_domain_TDRK5-3s.dat};
			\addplot[orange, fill=orange!25!white,fill opacity=0.4] table[
					x expr = {\thisrowno{0}},
					y expr = {\thisrowno{1}}
				] {data/stability_domain_TDRK4-2s.dat};
			\addplot[red, fill=red!25!white,fill opacity=0.4] table[
					x expr = {\thisrowno{0}},
					y expr = {\thisrowno{1}}
				] {data/stability_domain_TDRK3-2s.dat};
			
			\addplot[mintGreen, fill=mintGreen!25!white,fill opacity=0.4] table[
					x expr = {\thisrowno{1}},
					y expr = {\thisrowno{4}}
				] {data/stability_domain_ThDRK5-2s.dat};
			\addplot[mintGreen, fill=mintGreen!25!white,fill opacity=0.4] table[
					x expr = {\thisrowno{2}},
					y expr = {\thisrowno{5}}
				] {data/stability_domain_ThDRK5-2s.dat};
				
			\addplot[grassGreen, fill=grassGreen!25!white,fill opacity=0.4]  table[
					x expr = {\thisrowno{1}},
					y expr = {\thisrowno{4}}
				] {data/stability_domain_TDRK5-3s.dat};
			\addplot[grassGreen, fill=grassGreen!25!white,fill opacity=0.4] table[
					x expr = {\thisrowno{2}},
					y expr = {\thisrowno{5}}
				] {data/stability_domain_TDRK5-3s.dat};
			\addplot[blue, fill=blue!25!white,fill opacity=0.4] table[
					x expr = {\thisrowno{1}},
					y expr = {\thisrowno{4}}
				] {data/stability_domain_ThDRK7-3s.dat};
			\addplot[blue, fill=blue!25!white,fill opacity=0.4] table[
					x expr = {\thisrowno{2}},
					y expr = {\thisrowno{5}}
				] {data/stability_domain_ThDRK7-3s.dat};
			\addplot[lightViolet, fill=lightViolet!25!white,fill opacity=0.4] table[
					x expr = {\thisrowno{1}},
					y expr = {\thisrowno{4}}
				] {data/stability_domain_FDRK6-2s.dat};
			\addplot[lightViolet, fill=lightViolet!25!white,fill opacity=0.4] table[
					x expr = {\thisrowno{2}},
					y expr = {\thisrowno{5}}
				] {data/stability_domain_FDRK6-2s.dat};
	
		\end{axis}
		\end{tikzpicture}
	}
	{
	\includegraphics{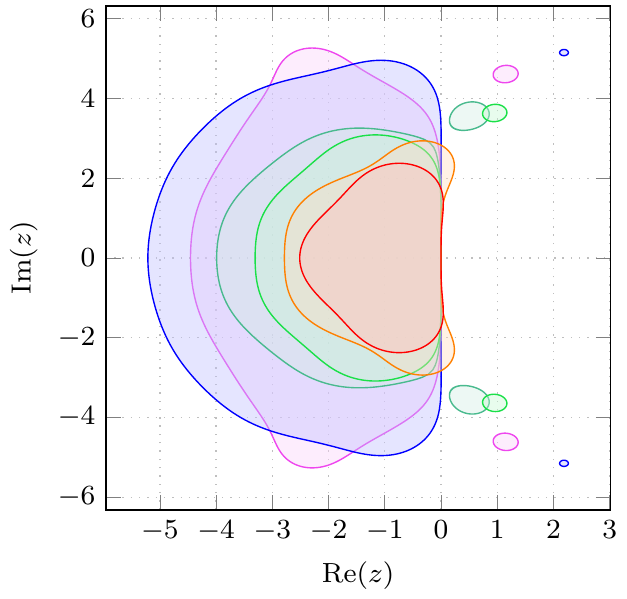}	
	}
\end{minipage}%
\hfill
\begin{minipage}{.45\textwidth}
\centering
\vspace{-1cm}
\resizebox{\textwidth}{!}{%
\begin{tabular}{l|c}
Scheme & $R(z)$ \\ \hline
& \\[\dimexpr-\normalbaselineskip+0.4em]
{\color{red}\rule[2pt]{3mm}{0.8pt}} $\method{2}{3}{2}$ & $\sum_{k=0}^{3} \frac{z^k}{k!}$ \\[1em]
{\color{orange}\rule[2pt]{3mm}{0.8pt}} $\method{2}{4}{2}$ & $\sum_{k=0}^{4} \frac{z^k}{k!}$ \\[1em]
{\color{grassGreen}\rule[2pt]{3mm}{0.8pt}} $\method{2}{5}{3}$ & $\sum_{k=0}^{5} \frac{z^k}{k!} + \frac{z^6}{600}$ \\ [1em]
{\color{mintGreen}\rule[2pt]{3mm}{0.8pt}}  $\method{3}{5}{2}$ & $\sum_{k=0}^{5} \frac{z^k}{k!} + \frac{z^6}{900}$ \\[1em]
{\color{blue}\rule[2pt]{3mm}{0.8pt}} $\method{3}{7}{3}$ & $\sum_{k=0}^{7} \frac{z^k}{k!} + c_8 z^8 + c_9 z^9$ \\[1em]
{\color{lightViolet}\rule[2pt]{3mm}{0.8pt}} $\method{4}{6}{2}$ & $\sum_{k=0}^{6} \frac{z^k}{k!} + \frac{z^7}{6480} + \frac{z^8}{77760}$ 
\end{tabular}
}
\end{minipage}
\caption{Regions of absolute stability $\mathcal{R} := \left\{z \in \mathbb{C} ~|~ |R(z)| \leq 1 \right\}$ and stability functions $R(z)$ for the two-derivative schemes $\method{2}{3}{2}$, $\method{2}{4}{2}$, $\method{2}{5}{3}$~\cite{TC10}, the three-derivative schemes $\method{3}{5}{2}$ and $\method{3}{7}{3}$~\cite{TurTur2017} and the four-derivative scheme $\method{4}{6}{2}$ (see Appendix \ref{app:ButherTableaux}). Except for $\method{3}{5}{2}$ and $\method{4}{6}{2}$, all schemes contain parts of the imaginary axis. Note that in the stability function of $\method{3}{7}{3}$, we have defined $c_8 := \frac{1}{23520} - \frac{\sqrt{2}}{70560}$ and $c_9 := \frac{11}{1481760} - \frac{\sqrt{2}}{246960}$.}
\label{fig:stability-regions-MDRK}
\end{figure}
\end{center}

\section{Multiderivative Runge-Kutta solvers for hyperbolic conservation laws}\label{sec:MDRKConsLaw}
Discretizing the spatial part of the hyperbolic conservation law \eqref{eq:1DConsLaw} necessitates a discretization of the domain $\Omega$. Hence, consider 
\begin{align*} 
 \{x_1, \ldots, x_M\}
\end{align*}
to be a uniform partition of $\Omega$ into $M$ cells of size $\Delta x$. 
A natural extension of Def.~\ref{def:MDRK-scheme} applied to Eq.~\eqref{eq:1DConsLaw} can then be expressed as
\begin{subequations}\label{eq:MDRK-PDE}
\begin{align}
w_i^{n,{\color{BlueGreen}l}} &:= w_i^n - \sum\limits_{k=1}^{\mathtt{r}} {\Delta t}^{k} \sum\limits_{{\color{VioletRed}\nu}=1}^{{\color{BlueGreen}l}-1} a_{{\color{BlueGreen}l}{\color{VioletRed}\nu}}^{(k)}D_x D^{k-1}_t f(w_i^{n,{\color{VioletRed}\nu}})\, , \label{eq:MDRK-PDE-stages} \\
w_i^{n+1} &:= w_i^n - \sum\limits_{k=1}^{\mathtt{r}} {\Delta t}^{k} \sum\limits_{{\color{BlueGreen}l}=1}^{\mathtt{s}} b^{(k)}_{\color{BlueGreen}l}D_x D^{k-1}_t f(w_i^{n,{\color{BlueGreen}l}}),  \label{eq:MDRK-PDE-update}
\end{align}
\end{subequations}
for ${\color{BlueGreen}l} = 1,\dots,\mathtt{s}$; with $D_x$ and $D_t$ being suitable approximations to $\partial_x$ and $\partial_t$ to be explained in the sequel. 
Contrary to the complete Cauchy-Kovalevskaya procedure as outlined for the second derivative in \eqref{eq:wtt}, only \emph{one} time derivative of the solution is transformed into a spatial derivative (see~\eqref{eq:wttoriginal}). 
 
The core focus of this paper is to avoid the explicit use of Jacobians of the flux function $f$. Jacobians of $f$ arise due to the usage of higher temporal derivatives, see, e.g., Eq.~\eqref{eq:wtt}.
In this, we follow the compact approximate Taylor (CAT) approach outlined in \cite{CarrilloPares2019}.
Since the CAT method heavily relies on discrete differentiation, first a small part is devoted to introducing the fundamental notation. Thereafter the method is described and applied to Eqs.~\eqref{eq:MDRK-PDE}.

\subsection{Discrete differentiation}
In this short section, we fix the notation on using finite differencing \cite{1964_Abramowitz_Stegun,QuarteroniNumA,2001_Ralston_Rabinowitz}. 
Considering central differences, the $(2p+1)$-point Lagrangian polynomials are given by 
\begin{equation}\label{eq:LagrangePolyCent}
\hspace{4em} L_{p,j}(\omega) := \prod\limits_{\substack{r=-p \\ r\neq j}}^p \dfrac{\omega - r}{j - r} \qquad j = -p, \dots, p \, .
\end{equation}
It is well-known that these polynomials can be used to interpolate $\varphi: \mathbb{R} \to \mathbb{R}$ in the points $x_{i-p},\dots, x_{i+p}$ through 
\begin{equation}\label{eq:IntPolyCent}
\mathcal{P}_i \varphi(x) := \sum\limits_{j=-p}^p L_{p,j}\left(\frac{x- x_i}{\Delta x}\right)\varphi(x_{i+j}).
\end{equation}
Similarly, from the $2p$-point Lagrangian polynomials
\begin{equation}\label{eq:LagrangePolyNonCent}
\hspace{4em} \ell_{p,j}(\omega) := \prod\limits_{\substack{r=-p+1 \\ r\neq j}}^p \dfrac{\omega - r}{j - r} \qquad j = -p+1, \dots, p \, ,
\end{equation}
(note that the index of the product begins at $r = -p+1$) we obtain the unique polynomial of degree $2p-1$ interpolating $\varphi$ in the points $x_{i-p+1},\dots, x_{i+p}$ through
\begin{equation}\label{eq:IntPolyNonCent}
\mathcal{Q}_i \varphi(x) := \sum\limits_{j=-p+1}^p \ell_{p,j}\left(\frac{x- x_i}{\Delta x}\right)\varphi(x_{i+j}).
\end{equation}
In the sequel, we use a similar notation as in \cite{CarrilloPares2019}: 
\begin{definition}[\cite{CarrilloPares2019}]
For the $k$-th derivative $0 \leq k \leq 2p$, we define the following quantities: 
\begin{alignat*}{2}
\delta^k_{p,j}      &:= L_{p,j}^{(k)}(0), \qquad j & =\ & -p, \ldots, p, \\
\gamma^{k,m}_{p,j}  &:= \ell_{p,j}^{(k)}(m) , \qquad j &\ =\ & -p+1, \ldots, p, \quad m = -p+1, \ldots, p.
\end{alignat*}
Note that the $k$-th derivatives $L_{p,j}^{(k)}(0)$ and $\ell_{p,j}^{(k)}(m)$ are derived analytically from Eq.~\eqref{eq:LagrangePolyCent} and Eq.~\eqref{eq:LagrangePolyNonCent}, respectively.
\end{definition}

Approximate derivatives can thus be derived from
\begin{subequations}\label{eq:IntPolyDer}
\begin{align}
(\mathcal{P}_i \varphi)^{(k)}(x_i) &= \frac{1}{{\Delta x}^k} \sum\limits_{j=-p}^p \delta^k_{p,j}\varphi(x_{i+j}) \, . \label{eq:IntPolyCentDer} \\
(\mathcal{Q}_i \varphi)^{(k)}(x_{i+m}) &= \frac{1}{{\Delta x}^k} \sum\limits_{j=-p+1}^p \gamma^{k,m}_{p,j}\varphi(x_{i+j}) \, , \label{eq:IntPolyNonCentDer}
\end{align}
\end{subequations}
with $m = -p+1, \dots, p$. Since we are working in a discrete context, we define the linear operator counterparts of Eq.~\eqref{eq:IntPolyCentDer} and Eq.~\eqref{eq:IntPolyNonCentDer} as
\begin{alignat*}{2}
P^{(k)} \colon  \mathbb{R}^{2p+1} & \rightarrow \mathbb{R}, &\qquad&  \mathbf{v} \mapsto \frac{1}{{\Delta x}^{k}} \sum\limits_{j=-p}^p \delta^k_{p,j}v_j, \\
Q_m^{(k)} \colon\mathbb{R}^{2p} & \rightarrow \mathbb{R}, &\qquad & \mathbf{w}  \mapsto \frac{1}{{\Delta x}^{k}} \sum\limits_{j=-p+1}^p \gamma^{k,m}_{p,j}w_j \, .
\end{alignat*}
\begin{remark}
The spatial index $i$ is neglected for the linear operators as its direct dependency on the node $x_i$ is lost, cf. Eq.~\eqref{eq:IntPolyCent} and Eq.~\eqref{eq:IntPolyNonCent}. Notice also that $Q_m^{(k)}$ takes vectors in $\mathbb{R}^{2p}$ as input, whereas $P^{(k)}$ takes vectors in $\mathbb{R}^{2p+1}$ as input. 
\end{remark}
A non-centered $2p$-point finite difference method to approximate $\partial^k_t w(x_i,t^{n + m})$ for $m = -p+1,\dots, p$ can therefore be written as
\begin{equation*}\label{eq:exampleFD-approximation}
Q_m^{(k)} \mathbf{w}^{\langle n \rangle}_i = \frac{1}{{\Delta t}^k} \sum\limits_{r=-p+1}^p \gamma^{k,m}_{p,j} w_{i}^{n+r} \, ,
\end{equation*}
with vector notation
\begin{equation}\label{eq:stencilVector}
\mathbf{w}^{\langle n \rangle}_i := \begin{pmatrix}
w_{i}^{n-p+1} \\
\vdots \\
w_{i}^{n+p}
\end{pmatrix} \, .
\end{equation}
The angled brackets represent the local stencil function
\begin{equation}\label{eq:stencilFunction}
\langle\cdot\rangle \colon \mathbb{Z} \to \mathbb{Z}^{2p} \colon n \mapsto \begin{pmatrix} n-p+1, & \dots, & n+p \end{pmatrix} ^T
\end{equation}
throughout this paper, and will be considered for both the spatial index $i$ as the temporal index $n$. Note that the position of the angled bracket (top or bottom) determines whether derivation is w.r.t. time (top) or space (bottom). 

To put the scheme into conservation form, in \cite{ZorioEtAl} auxiliary centered coefficients have been introduced. Here, the operators $P^{(k)}$ for $k \geq 1$ are written as differences of new `half-way point' interpolation operators. 
\begin{definition}[\cite{ZorioEtAl}]
Define $\lambda^{k-1}_{p,j}$ via the relations
\begin{subequations}\label{eq:halfwayCoeff}
\begin{align}
\delta^k_{p,p} &=: \lambda^{k-1}_{p,p} \, , \label{eq:halfwayCoeffa}\\
\delta^k_{p,j} &=: \lambda^{k-1}_{p,j} - \lambda^{k-1}_{p,j+1} \, , \quad j = -p+1, \dots, p-1 \, , \label{eq:halfwayCoeffb} \\
\delta^k_{p,-p} &=: -\lambda^{k-1}_{p,-p+1} \, . \label{eq:halfwayCoeffc} 
\end{align}
\end{subequations} 
\end{definition}
\begin{remark} 
The relations given in Eq.~\eqref{eq:halfwayCoeff} make up an overdetermined system, yet provide a unique solution $\lambda^{k-1}_{p,j}$ obtained from Eq.~\eqref{eq:halfwayCoeffa} and Eq.~\eqref{eq:halfwayCoeffb}, see \cite[Theorem~2]{ZorioEtAl}.
\end{remark} 
Notice the shift between $k$ and $k-1$. This is justified because we enforce a first order derivative relation for the approximation $(\mathcal{P}_i \varphi)^{(k)}(x_i)$ by splitting the operator as 
\begin{equation}\label{eq:SplittingIntPolyCent}
(\mathcal{P}_i \varphi)^{(k)}(x_i) = \frac{\left(\Uplambda^{(k-1)} \varphi\right)(x_{i+1/2})- \left(\Uplambda^{(k-1)} \varphi\right)(x_{i-1/2})}{\Delta x} \, ,
\end{equation}
in which $\Uplambda^{(k-1)}$ is an operator mapping to $\mathbb{P}_{2p-1}$ so that
\begin{equation*}\label{eq:DiffIntPolyCent}
\left(\Uplambda^{(k-1)} \varphi\right)(x_{i+1/2}) := \frac{1}{{\Delta x}^{k-1}} \sum\limits_{j=-p+1}^p \lambda^{k-1}_{p,j} \varphi(x_{i+j})  \, .
\end{equation*}
The linear operator alternative is defined by
\begin{align}
\label{eq:lambdak}
\Lambda^{(k-1)} \colon \mathbb{R}^{2p} &\longrightarrow \mathbb{R}, \qquad \mathbf{v} \mapsto \frac{1}{{\Delta x}^{k-1}} \sum\limits_{j=-p+1}^p \lambda^{k-1}_{p,j}  v_j. 
\end{align}
An overview of all the defined interpolation operators is given in Tbl.~\ref{tab:interpolation-operators}.
\begin{table}[h!]
\centering
\caption{A summary of the defined interpolation operators scaled and shifted to fit the uniform mesh locally at $x_i$. }
\label{tab:interpolation-operators}
\begin{tabular}{c|c||c}
Functional & Linear &  \\ \hline
$(\mathcal{P}_i \varphi)^{(k)}$  & $P^{(k)}\mathbf{v}$ & $k$-th derivative of the Lagrangian interpolation \\
& & polynomial in the nodes $x_{i-p},\dots,x_{i+p}$ \\[0.5em]
$(\mathcal{Q}_i \varphi)^{(k)}$ & $Q_m^{(k)}\mathbf{v}$ & $k$-th derivative of the Lagrangian interpolation  \\
& & polynomial in the nodes $x_{i-p+1},\dots,x_{i+p}$  \\[0.5em]
$\left(\Uplambda^{(k-1)} \varphi\right)(x_{i+1/2})$ & $\Lambda^{(k-1)}\mathbf{v}$ & $(k-1)$-th derivative of a half-way interpolation \\
& & at $x_{i+1/2}$ using the nodes $x_{i-p+1},\dots,x_{i+p}$ \\
& & with the difference coefficients defined by Eq.~\eqref{eq:halfwayCoeff} \\
\end{tabular}
\end{table}

\subsection{A Jacobian-free MDRK scheme}
With all the building blocks at our disposal, we can now describe how the final class of methods, that we call MDRKCAT, is assembled. Starting from Eq.~\eqref{eq:MDRK-PDE}, we define the conservative updates of the solution via
\begin{subequations}\label{eq:MDRKCAT-consLaw}
	\begin{align}
	w^{n,{\color{BlueGreen}l}}_i &:= w^n_i - \frac{\Delta t}{\Delta x}\big(F_{i+1/2}^{n,{\color{BlueGreen}l}} - F_{i-1/2}^{n,{\color{BlueGreen}l}} \big) \qquad {\color{BlueGreen}l} = 1,\dots,\mathtt{s} \, , \label{eq:MDRKCAT-consLaw-stage} \\
	w^{n+1}_i &:= w^n_i - \frac{\Delta t}{\Delta x}\big(F_{i+1/2}^{n} - F_{i-1/2}^{n} \big) \, , \label{eq:MDRKCAT-consLaw-update}
	\end{align}
\end{subequations}
in which the numerical fluxes are given by,
\begin{subequations}\label{eq:MDRKCAT-numFlux}
	\begin{align}
	F_{i+1/2}^{n,{\color{BlueGreen}l}} &= \sum\limits_{k=1}^{\mathtt{r}}{\Delta t}^{k-1} \sum\limits_{{\color{VioletRed}\nu}=1}^{{\color{BlueGreen}l}-1} a_{{\color{BlueGreen}l}{\color{VioletRed}\nu}}^{(k)} \Lambda^{(0)}(\widetilde{\mathbf{f}}^{{\color{VioletRed}\nu}})^{(k-1)}_{i,\langle 0 \rangle} \qquad {\color{BlueGreen}l} = 1,\dots,\mathtt{s} \, , \label{eq:MDRKCAT-numFlux-stage} \\
	F_{i+1/2}^{n} &= \sum\limits_{k=1}^{\mathtt{r}}{\Delta t}^{k-1} \sum\limits_{{\color{BlueGreen}l}=1}^{\mathtt{s}} b^{(k)}_{\color{BlueGreen}l} \Lambda^{(0)}(\widetilde{\mathbf{f}}^{\color{BlueGreen}l})^{(k-1)}_{i,\langle 0 \rangle} \, . \label{eq:MDRKCAT-numFlux-update}
	\end{align}
\end{subequations}
For the calculation of $\widetilde{\mathbf{f}}^{(k-1)}_{i,\langle 0 \rangle}$ the compact approximate Taylor (CAT) procedure~\cite{CarrilloPares2019} is used and the flux derivatives can be calculated according to Eq.~\eqref{eq:lambdak} by
\begin{equation*}
\Lambda^{(0)}\widetilde{\mathbf{f}}^{(k-1)}_{i,\langle 0 \rangle} := \sum\limits_{j=-p+1}^p \lambda^0_{p,j}\widetilde{f}^{(k-1)}_{i,j} \, .
\end{equation*}
$\widetilde{f}^{(k-1)}_{i,j} \approx \partial^{k-1}_t f(w)(x_{i+j},t^n)$ indicates the local approximations for the time-derivatives of the flux and are given by
\begin{align*}
\hspace{3cm} \widetilde{f}^{(k-1)}_{i,j} &:= Q_{0}^{(k-1)}\left(\bm{\mathfrak{f}_T}\right)^{k-1,\langle n \rangle}_{i,j} \qquad j = -p+1, \dots, p \, .
\end{align*}
They rely on the approximate flux values $\left(\mathfrak{f}_T\right)^{k-1,n+r}_{i,j} \approx f(w(x_{i+j},t^{n+r}))$. In other words, we take the $(k-1)$-st discrete temporal derivative in $x_{i+j}$ using approximate fluxes 
\begin{align*}
\hspace{0.8cm} \left(\mathfrak{f}_T\right)^{k-1,n+r}_{i,j} &:= f\left(w^n_{i+j} + \sum\limits_{m=1}^{k-1}\frac{(r\Delta t)^m}{m!} \widetilde{w}^{(m)}_{i,j}\right),
\end{align*}
for $j,r = -p+1, \dots, p$. The only thing that is left to define are the quantities $\widetilde{w}^{(m)}_{i,j} \approx \frac{\partial^m}{\partial t^m} w(x_{i+j}, t^n)$.  Their approximation makes heavy use of the Cauchy-Kovalevskaya identity {}$\partial^m_t w = -\partial_x\partial^{m-1}_t f(w)$, they are hence approximated by
\begin{align*}
\hspace{3cm} \widetilde{w}^{(m)}_{i,j} &:= - Q_j^{(1)}\widetilde{\mathbf{f}}^{(m-1)}_{i,\langle 0 \rangle} \qquad j = -p+1, \dots, p.
\end{align*}
{Via the described steps, the vectors $\widetilde{\mathbf{f}}^{(k-1)}_{i,\langle 0 \rangle}$ are recursively obtained, see also~\cite{CarrilloPares2019}.}

\begin{definition}
 For a more precise terminology, we define the specific $\mathtt{r}$-derivative, $q$-th order, $\mathtt s$-stage MDRKCAT method as $\methodcat{\mathtt{r}}{q}{\mathtt{s}}$.
\end{definition}
A summary of the $\methodcat{\mathtt{r}}{q}{\mathtt{s}}$ procedure to obtain the stage values is given in Alg.~\ref{alg:MDRKCAT}. 
Note that the flux at the left half-way point is obtained either from a shift of the index, i.e. $F^n_{i-1/2}=F^n_{i-1+1/2}$ or is given by the boundary condition.
\begin{algorithm}[H]
 	\caption{Stages of $\methodcat{\mathtt{r}}{q}{\mathtt{s}}$, an $\mathtt{r}$-derivative, $q$-th order, $\mathtt s$-stage MDRKCAT method }
  	\label{alg:MDRKCAT}
  	\resizebox{\textwidth}{!}{%
  	\begin{minipage}[t]{0.53\linewidth}
	\begin{algorithm2e}[H]
		\underline{Stage solution (${\color{BlueGreen}l} = 2, \dots, \mathtt{s}$)}:\vspace{1mm}\\
 			\For { $j = -p+1$ \KwTo $p$ }
 			{
 				\vspace{1mm}
 				$(\widetilde{f}^{{\color{BlueGreen}l}-1})^{(0)}_{i,j}= f(w^{n,{\color{BlueGreen}l}-1}_{i+j})$
 				\vspace{1mm} 
 			}
 			$F_{i+1/2}^{n,{\color{BlueGreen}l}} = \sum\limits_{{\color{VioletRed}\nu}=1}^{{\color{BlueGreen}l}-1} a_{{\color{BlueGreen}l}{\color{VioletRed}\nu}}^{(1)} \Lambda^{(0)}(\widetilde{\mathbf{f}}^{{\color{VioletRed}\nu}})^{(0)}_{i,\langle 0 \rangle}$\vspace{2mm}\\
 			\For { $k = 2$ \KwTo $\mathtt{r}$ }
 			{
				\vspace{1mm} 					
 				Get $(\widetilde{f}^{{\color{BlueGreen}l}-1})^{(k-1)}_{i,j}$ via CAT procedure.\\
 				$F_{i+1/2}^{n,{\color{BlueGreen}l}} ~{+}{=}~ {\Delta t}^{k-1}\sum\limits_{{\color{VioletRed}\nu}=1}^{{\color{BlueGreen}l}-1} a_{{\color{BlueGreen}l}{\color{VioletRed}\nu}}^{(k)} \Lambda^{(0)}(\widetilde{\mathbf{f}}^{{\color{VioletRed}\nu}})^{(k-1)}_{i,\langle 0 \rangle}$
 			}
 			\vspace{1mm} $w^{n,{\color{BlueGreen}l}}_i = w^n_i - \frac{\Delta t}{\Delta x}\big(F_{i+1/2}^{n,{\color{BlueGreen}l}} - F_{i-1/2}^{n,{\color{BlueGreen}l}} \big)$
 	\end{algorithm2e}
 	\end{minipage}
 	\begin{minipage}[t]{0.615\linewidth}
 	\begin{algorithm2e}[H]
 		\underline{CAT procedure \cite{CarrilloPares2019} ($k = 2, \dots, \mathtt{r}$)}:\vspace{1mm}\\
 			\For { $j = -p+1$ \KwTo $p$ }
 			{
 				\vspace{3mm}
 				$\widetilde{w}^{(k-1)}_{i,j} = - Q_j^{(1)}\widetilde{\mathbf{f}}^{(k-2)}_{i,\langle 0 \rangle}$  \\
				$\hspace{1cm} = - \frac{1}{{\Delta x}} \sum\limits_{r=-p+1}^p \gamma^{1,j}_{p,r}\widetilde{f}^{(k-2)}_{i,r}$ \vspace{2mm} \\
				
				\For { $r = -p+1$ \KwTo $p$ }
 				{ 	
 								
					\vspace{1mm} $\left(\mathfrak{f}_T\right)^{k-1,n+r}_{i,j} = f\left(w^n_{i+j} + \sum\limits_{m=1}^{k-1}\frac{(r\Delta t)^m}{m!} \widetilde{w}^{(m)}_{i,j}\right) $			
 				}
 				\vspace{3mm}
 				$\widetilde{f}^{(k-1)}_{i,j} = Q_{0}^{(k-1)}\left(\bm{\mathfrak{f}_T}\right)^{k-1,\langle n \rangle}_{i,j}$\\
 				$\hspace{9.5mm} =  \frac{1}{{\Delta t}^{k-1}} \sum\limits_{r=-p+1}^p \gamma^{k-1,0}_{p,r} \left(\mathfrak{f}_T\right)^{k-1,n+r}_{i,j} $ 
			}
 	\end{algorithm2e}
 	\end{minipage}
 	}
\end{algorithm}

\section{Consistency analysis}\label{sec:consistency}
In this section, we show that the $\methodcat{\mathtt{r}}{q}{\mathtt{s}}$ methods are consistent. The order of consistency is, as to be expected, the minimum of the underlying Runge-Kutta order ($q$) and the order of the interpolation ($2p$). Let us make the following two important assumptions: 
\begin{assumption}
 We assume both $f$ and $w$ to be smooth functions in $C^{\infty}$. 
 Furthermore, we assume that $\Delta t$ and $\Delta x$ are asymptotically comparable in size, i.e., 
 \begin{align*}
   \mathcal O(\Delta t) = \mathcal O(\Delta x).  
 \end{align*}
\end{assumption}
Throughout this section, we use the following notation to reduce the number of function arguments: 
\[
{\partial_t^{k} f(w)}_{i+j} \equiv \partial_t^{k} f(w(x_{i+j}, t^n)). 
\]
Whenever possible, similar notation is used for other functions. The time index $n$ is only mentioned when necessary. We immediately state the main result and thereafter, in a successive form, deduce the necessary lemmas upon which its proof relies.
\begin{theorem}\label{thm:mdrkcat-accuracy}
The consistency order of an explicit $\methodcat{\mathtt{r}}{q}{\mathtt{s}}$ method is given by $\min(2p,q)$. Here, $q$ is the consistency order of the underlying MDRK method, while the stencil to update $w(x_i,t^n)$ is given by $\{i-p, i-p+1, \ldots, i+p\}$. 
\end{theorem}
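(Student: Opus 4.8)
The plan is to reduce the consistency statement for the fully discrete MDRKCAT scheme to two separate error contributions: the temporal error of the underlying MDRK integrator (order $q$) and the spatial/derivative-approximation error introduced by the CAT machinery of Lagrangian interpolation (order $2p$). First I would establish, as a lemma, that the building blocks $\widetilde w^{(m)}_{i,j}$ and $\widetilde f^{(k-1)}_{i,\langle 0\rangle}$ produced recursively by the CAT procedure are consistent approximations of $\partial_t^m w(x_{i+j},t^n)$ and $\partial_t^{k-1} f(w)(x_{i+j},t^n)$ respectively, with a local truncation error of size $\mathcal O(\Delta x^{\min(2p,\,q-k+1)})$ — the drop in order as $k$ grows reflecting that each additional temporal derivative consumes one order from the Taylor expansion used in $(\mathfrak f_T)^{k-1,n+r}_{i,j}$. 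This is done by induction on $k$: the base case $k=1$ is exact ($\widetilde f^{(0)}_{i,j}=f(w^n_{i+j})$); in the inductive step one invokes the standard finite-difference error bounds for $Q_j^{(1)}$ and $Q_0^{(k-1)}$ (which differentiate the degree-$(2p-1)$ interpolant, hence are order $2p-k+1$ accurate for the $k$-th derivative) together with the inductive hypothesis on the lower-order $\widetilde w^{(m)}$ feeding into the Taylor polynomial inside $f$. One also uses here that $\mathcal O(\Delta t)=\mathcal O(\Delta x)$ so that the powers $(r\Delta t)^m$ may be compared directly against $\Delta x$ powers.

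Next I would show that $\Lambda^{(0)}\widetilde{\mathbf f}^{(k-1)}_{i,\langle 0\rangle}$, the half-way numerical flux piece, satisfies
\begin{equation*}
\frac{\Lambda^{(0)}\widetilde{\mathbf f}^{(k-1)}_{i,\langle 0\rangle} - \Lambda^{(0)}\widetilde{\mathbf f}^{(k-1)}_{i-1,\langle 0\rangle}}{\Delta x}
= \partial_x \partial_t^{k-1} f(w)_{i} + \mathcal O(\Delta x^{\min(2p,\,q-k+1)}),
\end{equation*}
by combining the splitting identity \eqref{eq:SplittingIntPolyCent} (which makes $\Lambda^{(0)}$ an order-$2p$ reconstruction of the flux at half-points, with the telescoping difference recovering $P^{(1)}$, an order-$2p$ approximation of $\partial_x$) with the lemma above applied componentwise to the stencil entries. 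Substituting these flux expansions into the conservative updates \eqref{eq:MDRKCAT-consLaw} and collecting the $\Delta t^k$-weighted sums, the scheme for $w_i^{n,l}$ and $w_i^{n+1}$ matches the semi-discrete MDRK recursion \eqref{eq:MDRK-PDE} with each $D_x D_t^{k-1} f$ replaced by $\partial_x\partial_t^{k-1} f$ up to $\mathcal O(\Delta x^{\min(2p,q)})$; the term-by-term order loss in $k$ is harmless because each such term already carries a factor $\Delta t^k$, so $\Delta t^k \cdot \mathcal O(\Delta x^{q-k+1}) = \mathcal O(\Delta x^{q+1})$, uniformly over $k=1,\dots,\mathtt r$. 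Finally, invoking the order-$q$ consistency of the exact MDRK method from Def.~\ref{def:MDRK-scheme} applied to the method-of-lines ODE with right-hand side $-\partial_x f(w)$ and its Cauchy-Kovalevskaya-generated derivatives, the local truncation error of the full scheme is $\mathcal O(\Delta t^{q+1}) + \mathcal O(\Delta x^{2p+1}) = \mathcal O(\Delta x^{\min(2p,q)+1})$, which is the claim.

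The main obstacle I anticipate is the bookkeeping in the induction of the first lemma: one must carefully track how the order degrades with $k$ through the nested composition (discrete spatial derivative $Q_j^{(1)}$, then Taylor expansion in $\Delta t$ inside the nonlinear $f$, then discrete temporal derivative $Q_0^{(k-1)}$), and verify that the degradation is exactly compensated by the $\Delta t^k$ prefactor when everything is assembled — in particular checking the worst case $k=\mathtt r$ and confirming no cross-term between the spatial interpolation error and the temporal Taylor-truncation error is of lower order than claimed. A secondary technical point is justifying that the recursive definition is well-posed and that the implied constants in all the $\mathcal O(\cdot)$ bounds depend only on bounded derivatives of $f$ and $w$ (using the smoothness assumption), uniformly in $i$, $n$, so that local truncation errors sum to a global error of the same order.
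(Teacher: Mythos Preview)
Your overall plan mirrors the paper's: an inductive lemma for the CAT building blocks $\widetilde f^{(k-1)}_{i,j}$, a flux-difference estimate, and then the order-$q$ MDRK consistency. Two points need correction. First, the exponent in the building-block error should read $2p-k+1$, not $q-k+1$: the CAT accuracy depends only on the stencil half-width $p$ and the derivative index $k$, never on the Runge--Kutta order $q$. With this fix your telescoping $\Delta t^{k}\cdot\mathcal O(\Delta x^{2p-k+1})=\mathcal O(\Delta x^{2p+1})$ is the right spatial contribution.

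The more serious gap is in the flux-difference step. Knowing only $\widetilde f^{(k-1)}_{i,j}=\partial_t^{k-1}f(w)_{i+j}+\mathcal O(\Delta x^{2p-k+1})$ and ``applying it componentwise'' does not give the claimed bound: after forming $(\Lambda^{(0)}\widetilde{\mathbf f}^{(k-1)}_{i,\langle 0\rangle}-\Lambda^{(0)}\widetilde{\mathbf f}^{(k-1)}_{i-1,\langle 0\rangle})/\Delta x$ the unstructured $\mathcal O(\Delta x^{2p-k+1})$ remainder is divided by $\Delta x$ and drops to $\mathcal O(\Delta x^{2p-k})$, yielding only local truncation $\mathcal O(\Delta x^{2p})$ and hence spatial order $2p-1$, one short of the theorem whenever $2p\le q$. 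The paper avoids this by proving a \emph{structured} expansion
\[
\widetilde f^{(k-1)}_{i,j}=\partial_t^{k-1}f(w)_{i+j}
+\eta_p^{(k-1)}\bigl(R^{(k-1)}_{f,t}\bigr)_{i+j}\,\Delta t^{2p-k+1}
+\xi_{p,j}^{(k-1)}\bigl(R^{(k-1)}_{f,x}\bigr)_{i}\,\Delta x^{2p-k+1}
+\mathcal O(\Delta x^{2p-k+2}),
\]
where the leading error terms are either a smooth function evaluated at $x_{i+j}$, or a product of a $j$-only coefficient with a quantity depending on $i$ alone. With this separation, applying $\Lambda^{(0)}$ and differencing in $i$ converts each leading term into a genuine difference quotient of a smooth function, hence $\mathcal O(1)$ rather than $\mathcal O(1/\Delta x)$, and the order $2p-k+1$ survives. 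This structural tracking through the induction is the heart of the argument; it is not the ``secondary technical point'' you flag but the principal one, and without it the proof does not close.
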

\begin{proof}
The proof relies on Lemmas that will be proven in the sequel. In La.~\ref{lem:mdrkcat-numfluxDiff-accuracy}, it is shown that the numerical flux difference gives the correct flux up to an order of $2p$. We can hence substitute the exact solution $w(x,t)$ into Eq.~\eqref{eq:MDRKCAT-consLaw-update}, which immediately gives the requested result due to the fact that the Runge-Kutta update is an integration scheme of order $q+1$:
\begin{align*}
&\hphantom{=} w(x_{i},t^{n+1}) - w(x_{i},t^n) + \frac{\Delta t}{\Delta x}\big(F_{i+1/2}^{n} - F_{i-1/2}^{n} \big) \\
& = w(x_{i},t^{n+1}) - w(x_{i},t^n) + \sum\limits_{k=1}^{\mathtt{r}} {\Delta t}^{k} \sum\limits_{{\color{BlueGreen}l}=1}^{\mathtt{s}} b^{(k)}_{\color{BlueGreen}l}\partial_x\partial^{k-1}_t f(w)_i^{n,{\color{BlueGreen}l}}  + \mathcal{O}({\Delta x}^{2p+1}) \\
& = \mathcal{O}({\Delta t}^{q+1}) + \mathcal{O}({\Delta x}^{2p+1}) \, .
\end{align*}
$~\hfill \square$
\end{proof}
\begin{remark}
Since the convergence order is $\min(2p,q)$, the optimal choice w.r.t. computational efficiency is to set $p = \lceil q/2 \rceil $. Hence, ``$\methodcat{\mathtt{r}}{q}{\mathtt{s}}$'' does not contain the variable $p$.
\end{remark}

\begin{lemma}\label{lem:mdrkcat-numfluxDiff-accuracy}
The update numerical flux \eqref{eq:MDRKCAT-numFlux-update} satisfies
\begin{equation*}
\frac{F_{i+1/2}^{n} - F_{i-1/2}^{n}}{\Delta x} = \sum\limits_{k=1}^{\mathtt{r}}{\Delta t}^{k-1}\sum\limits_{{\color{BlueGreen}l}=1}^{\mathtt{s}} b^{(k)}_{\color{BlueGreen}l}\partial_x\partial^{k-1}_t f(w)_i^{n,{\color{BlueGreen}l}} + \mathcal{O}({\Delta x}^{2p}) \, .
\end{equation*}
An analogous result holds for the stage flux \eqref{eq:MDRKCAT-numFlux-stage}.
\end{lemma}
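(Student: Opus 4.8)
\emph{Strategy.} The plan is to collapse the entire flux difference onto a single high-order centered spatial stencil acting on the CAT approximations of the temporal flux derivatives, and then to peel off the exact part from the approximation error. The pivotal tool is the defining property of the half-way coefficients $\lambda^{k-1}_{p,j}$, Eqs.~\eqref{eq:halfwayCoeff}--\eqref{eq:SplittingIntPolyCent}, for $k=1$: for arbitrary grid data the centered $(2p{+}1)$-point first-derivative operator $P^{(1)}$, built on the stencil $\{i-p,\dots,i+p\}$, equals (up to the factor $1/\Delta x$) the difference of the two adjacent half-way evaluations of $\Lambda^{(0)}$. Inserting this identity into \eqref{eq:MDRKCAT-numFlux-update} term by term turns the left-hand side of the lemma into
\begin{equation*}
\frac{F_{i+1/2}^{n} - F_{i-1/2}^{n}}{\Delta x} = \sum_{k=1}^{\mathtt r}{\Delta t}^{k-1}\sum_{{\color{BlueGreen}l}=1}^{\mathtt s} b^{(k)}_{\color{BlueGreen}l}\, P^{(1)}\big(\widetilde f^{(k-1)}_{i,-p},\dots,\widetilde f^{(k-1)}_{i,p}\big),
\end{equation*}
where $\widetilde f^{(k-1)}_{i,j}$ carries the stage-${\color{BlueGreen}l}$ data; so it suffices to control $P^{(1)}$ applied to each block of CAT values $\big(\widetilde f^{(k-1)}_{i,j}\big)_{j=-p}^{p}$.

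\emph{Splitting off the exact part.} I would write $\widetilde f^{(k-1)}_{i,j} = {\partial_t^{k-1} f(w)}_{i+j} + \eta^{(k-1)}_{i,j}$. For the exact part, standard interpolation theory for the $(2p{+}1)$-point centered stencil gives $P^{(1)}\big({\partial_t^{k-1}f(w)}_{i+\bullet}\big) = \partial_x\partial_t^{k-1}f(w)_i + \mathcal O(\Delta x^{2p})$. For the remainder, I rely on the subsequent lemmas on the CAT building blocks $\widetilde w^{(m)}_{i,j}$ and $\widetilde f^{(k-1)}_{i,j}$: they must deliver that, for each fixed stage, $\eta^{(k-1)}_{i,\bullet}$ is the restriction to the grid of a $C^{\infty}$ function of $x$ which, together with its $x$-derivatives, is of size $\mathcal O(\Delta x^{2p-k+1})$, with $\eta^{(0)}\equiv 0$ (the zeroth CAT level is just $f$ evaluated at the nodal stage values). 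The decisive point is then that $P^{(1)}$ applied to the grid restriction of a smooth $\mathcal O(\Delta x^{2p-k+1})$ function is again $\mathcal O(\Delta x^{2p-k+1})$ — \emph{not} $\mathcal O(\Delta x^{2p-k})$ — because on smooth data $P^{(1)}$ reproduces a genuine first derivative, so the nominal factor $\Delta x^{-1}$ is absorbed. Hence $P^{(1)}\big(\widetilde f^{(k-1)}_{i,\bullet}\big) = \partial_x\partial_t^{k-1}f(w)_i + \mathcal O(\Delta x^{2p-k+1})$, and multiplying by the prefactor ${\Delta t}^{k-1} = \mathcal O(\Delta x^{k-1})$ (the comparability assumption $\mathcal O(\Delta t)=\mathcal O(\Delta x)$) collapses the $k$-th contribution to ${\Delta t}^{k-1}\partial_x\partial_t^{k-1}f(w)_i^{n,{\color{BlueGreen}l}} + \mathcal O(\Delta x^{2p})$. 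Summing the finitely many $(k,{\color{BlueGreen}l})$-terms with their bounded coefficients yields the claim. The stage flux \eqref{eq:MDRKCAT-numFlux-stage} is treated verbatim, replacing $b^{(k)}_{\color{BlueGreen}l}$ by $a^{(k)}_{{\color{BlueGreen}l}{\color{VioletRed}\nu}}$ and the outer sum by $\sum_{{\color{VioletRed}\nu}=1}^{{\color{BlueGreen}l}-1}$, using the same error representation for the building blocks evaluated at the earlier stages.

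\emph{Main obstacle.} The real work is the smooth-error representation for $\widetilde f^{(k-1)}_{i,j}$, which is precisely why it must be deferred to the subsequent lemmas: a crude magnitude bound on $\widetilde f^{(k-1)}_{i,j} - {\partial_t^{k-1}f(w)}_{i+j}$ would lose an order when hit by the $\Delta x^{-1}$ in $P^{(1)}$ and leave only $\mathcal O(\Delta x^{2p-1})$ overall. One genuinely needs the error to be a smooth-in-$x$ quantity of the stated order, which forces those lemmas to propagate, through the recursion $\widetilde w^{(m)}_{i,j} = -Q_j^{(1)}\widetilde{\mathbf f}^{(m-1)}_{i,\langle 0\rangle}$ and $\big(\mathfrak f_T\big)^{k-1,n+r}_{i,j} = f\big(w^n_{i+j} + \sum_m (r\Delta t)^m \widetilde w^{(m)}_{i,j}/m!\big)$ followed by $Q_0^{(k-1)}$, explicit Taylor-type remainders built from $\partial^{\bullet}_t f(w)$ at $t^n$ together with the Cauchy--Kovalevskaya relations $\partial^m_t w = -\partial_x\partial^{m-1}_t f(w)$, rather than mere $\mathcal O(\cdot)$ estimates. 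Bookkeeping which powers of $r$ survive the non-centered operators $Q_0^{(k-1)}$ — they annihilate polynomials in $r$ of degree $<k-1$ and are exact up to degree $2p-1$ — is what makes the exponent $2p-k+1$ come out correctly; everything in the present lemma reduces to that input plus the elementary observation about $P^{(1)}$ on smooth data.
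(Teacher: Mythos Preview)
Your opening move is where the argument breaks. You assert that the splitting identity \eqref{eq:SplittingIntPolyCent} lets you rewrite
\[
\frac{\Lambda^{(0)}(\widetilde{\mathbf f}^{\color{BlueGreen}l})^{(k-1)}_{i,\langle 0\rangle}-\Lambda^{(0)}(\widetilde{\mathbf f}^{\color{BlueGreen}l})^{(k-1)}_{i-1,\langle 0\rangle}}{\Delta x}
= P^{(1)}\big(\widetilde f^{(k-1)}_{i,-p},\dots,\widetilde f^{(k-1)}_{i,p}\big).
\]
That identity holds only when the two $\Lambda^{(0)}$ evaluations act on overlapping windows of a \emph{single} grid array, i.e.\ when $\widetilde f^{(k-1)}_{i-1,j}=\widetilde f^{(k-1)}_{i,j-1}$. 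For $k=1$ this is true, since $\widetilde f^{(0)}_{i,j}=f(w^{n,{\color{BlueGreen}l}}_{i+j})$ depends only on $i+j$. For $k\ge 2$ it is false: the CAT quantities $\widetilde f^{(k-1)}_{i,j}$ are built from the \emph{local} stencil centered at $x_i$ (that is the whole point of the ``compact'' construction), so they genuinely depend on $(i,j)$ and not merely on $i+j$. In particular $\widetilde f^{(k-1)}_{i,-p}$ is not even defined. Your reduction to $P^{(1)}$ on one vector therefore does not go through.

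The same issue resurfaces in your treatment of the remainder: you require $\eta^{(k-1)}_{i,\bullet}$ to be the grid restriction of a smooth function of $x$. The subsequent lemmas do \emph{not} give this. They give
\[
\widetilde f^{(k-1)}_{i,j}-\partial_t^{k-1}f(w)_{i+j}
= \eta_p^{(k-1)}\,R^{(k-1)}_{f,t}(x_{i+j})\,\Delta t^{2p-k+1}
+ \xi_{p,j}^{(k-1)}\,R^{(k-1)}_{f,x}(x_{i})\,\Delta x^{2p-k+1}
+ \mathcal O(\Delta x^{2p-k+2}),
\]
where the second leading term has a $j$-dependent coefficient $\xi_{p,j}^{(k-1)}$ multiplied by a function evaluated at $x_i$, not at $x_{i+j}$. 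This is not a smooth function of the grid location $x_{i+j}$, so ``$P^{(1)}$ on smooth data'' does not apply to it. The paper exploits precisely this factored structure: applying $\Lambda^{(0)}$ at $i$ and $i-1$ to the $\xi$-part produces the same constant $\sum_j\lambda^0_{p,j}\xi_{p,j}^{(k-1)}$ on both sides, leaving the bounded finite difference $\big(R^{(k-1)}_{f,x}(x_i)-R^{(k-1)}_{f,x}(x_{i-1})\big)/\Delta x$; only the translation-invariant $\eta_p$-part genuinely collapses to $P^{(1)}$ of a smooth function. Your high-level outline (split off the exact part, control the remainder without losing an order, use $\Delta t^{k-1}$ to restore $2p$) is right, but the mechanism you invoke for the remainder is not available --- you need the explicit two-part error representation, not a generic ``smooth in $x$'' hypothesis.
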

\begin{proof}
From La.~\ref{lem:cat-accuracy-k=1} and La.~\ref{lem:cat-accuracy-k>1}, we obtain that for $k > 1$ and ${\color{BlueGreen}l} = 1,\dots,\mathtt{s}$ there holds 
\begin{align*}
(\widetilde{f}^{\color{BlueGreen}l})^{(k-1)}_{i,j} = {\partial_t^{k-1} f(w)}_{i+j}^{n,{\color{BlueGreen}l}} &+ \eta_p^{(k-1)} \big(R^{(k-1)}_{f,t}\big)_{i+j}^{n,{\color{BlueGreen}l}} \cdot {\Delta t}^{2p-k+1} \\
&+ \xi_{p,j}^{(k-1)}\big(R^{(k-1)}_{f,x}\big)_{i}^{n,{\color{BlueGreen}l}} \cdot {\Delta x}^{2p-k+1} + \mathcal{O}({\Delta x}^{2p-k+2}) \, ,
\end{align*} 
with $\eta_p^{(k-1)}$ and $\xi_{p,j}^{(k-1)}$ real-valued coefficients; and $R^{(k-1)}_{f,x}$, $R^{(k-1)}_{f,t}$ smooth functions of space and time.
The above formula is put into use by substituting it into the numerical flux \eqref{eq:MDRKCAT-numFlux-update}:
\begin{align*}
&\frac{\Lambda^{(0)}(\widetilde{\mathbf{f}}^{\color{BlueGreen}l})^{(k-1)}_{i,\langle 0 \rangle} - \Lambda^{(0)}(\widetilde{\mathbf{f}}^{\color{BlueGreen}l})^{(k-1)}_{i-1,\langle 0 \rangle}}{\Delta x} \\
& \quad = \frac{\left(\Uplambda^{(0)} \partial_t^{k-1} f(w)\right)_{i+1/2}^{n,{\color{BlueGreen}l}} - \left(\Uplambda^{(0)} \partial_t^{k-1} f(w)\right)_{i-1/2}^{n,{\color{BlueGreen}l}}}{\Delta x} \\
& \quad\quad + {\Delta t}^{2p-k+1}\cdot \eta_p^{(k-1)} \frac{\big(\Uplambda^{(0)} R^{(k-1)}_{f,t}\big)_{i+1/2}^{n,{\color{BlueGreen}l}} - \big(\Uplambda^{(0)} R^{(k-1)}_{f,t}\big)_{i-1/2}^{n,{\color{BlueGreen}l}}}{\Delta x} \\
& \quad\quad + {\Delta x}^{2p-k+1}\cdot \frac{\big(R^{(k-1)}_{f,x}\big)_{i}^{n,{\color{BlueGreen}l}} - \big(R^{(k-1)}_{f,x}\big)_{i-1}^{n,{\color{BlueGreen}l}}}{\Delta x} \underbrace{\sum\limits_{j=-p+1}^p \lambda^0_{p,j}\xi_{p,j}^{(k-1)}}_{\mathcal{O}(1)} \\[-0.7cm]
& \quad\quad + \frac{1}{\Delta x}\mathcal{O}({\Delta x}^{2p-k+2}) \\[0.5em]
& \quad \refEqual{\eqref{eq:SplittingIntPolyCent}} \left(\mathcal{P}_i\partial_t^{k-1} f(w)\right)^{(1)}(x_{i},t^{n,{\color{BlueGreen}l}}) \\[0.5em]
& \quad\quad + {\Delta t}^{2p-k+1}  \cdot \eta_p^{(k-1)}\big(\mathcal{P}_i  R^{(k-1)}_{f,t}\big)^{(1)}(x_{i},t^{n,{\color{BlueGreen}l}}) + \mathcal{O}({\Delta x}^{2p-k+1}) \\[0.5em]
& \quad = {\partial_x\partial_t^{k-1} f(w)}_{i}^{n,{\color{BlueGreen}l}} + \mathcal{O}({\Delta x}^{2p-k+1}) \, .
\end{align*}
Please note that the term $\left(\big(R^{(k-1)}_{f,x}\big)_{i}^{n,{\color{BlueGreen}l}} - \big(R^{(k-1)}_{f,x}\big)_{i-1}^{n,{\color{BlueGreen}l}}\right)/\Delta x$ can be interpreted as a finite difference approximation of the derivative of the smooth function $R^{k-1}_{f,x}$ and therefore remains bounded, i.e., is $\mathcal{O}(1)$.
\newline$~\hfill \square$

\end{proof}
Before we regard the consistency analysis of the CAT steps in La.~\ref{lem:cat-accuracy-k=1} and La.~\ref{lem:cat-accuracy-k>1}, the follwing identity on the difference coefficients is described.
\begin{lemma}\label{lem:LagrSumLemma}
Consider a local stencil index $j = -p+1, \dots, p$. Then there holds for $k = 1, \dots, 2p-1$:
\[
\hspace{4em} \sum\limits_{r=-p+1}^p \gamma_{p,r}^{k,j}(r-j)^s = k! \delta_{s,k} \, , \quad s=0,\dots,2p-1 \, .
\]
The symbol $\delta_{s,k}$ here represents the Kronecker-delta function.
\end{lemma}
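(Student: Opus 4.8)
The plan is to recognize the sum $\sum_{r=-p+1}^p \gamma_{p,r}^{k,j}(r-j)^s$ as the $k$-th derivative, evaluated at the node $\omega = j$, of the polynomial that interpolates the monomial data $\varphi(x_{i+r}) = (r-j)^s$ on the $2p$ nodes $r = -p+1,\dots,p$. Indeed, by the definition of the coefficients $\gamma_{p,r}^{k,j} = \ell_{p,r}^{(k)}(j)$ and the interpolation formula \eqref{eq:IntPolyNonCent}, we have
\begin{equation*}
\sum_{r=-p+1}^p \gamma_{p,r}^{k,j}(r-j)^s = \left.\frac{\mathrm d^k}{\mathrm d\omega^k}\right|_{\omega=j}\left(\sum_{r=-p+1}^p \ell_{p,r}(\omega)\,(r-j)^s\right).
\end{equation*}
The first step is thus to identify the bracketed quantity. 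Since $(r-j)^s$, viewed as a function of $r$, is a polynomial in $r$ of degree $s \le 2p-1$, and since the Lagrange basis $\{\ell_{p,r}\}_{r=-p+1}^p$ reproduces exactly all polynomials of degree $\le 2p-1$ on those nodes, the interpolant of the data $\{(r-j)^s\}_r$ is \emph{exactly} the polynomial $\omega \mapsto (\omega - j)^s$. Hence the bracketed sum equals $(\omega-j)^s$ identically.

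The second step is then a one-line computation: differentiate $(\omega-j)^s$ exactly $k$ times and evaluate at $\omega = j$. If $s < k$ the $k$-th derivative is identically zero; if $s = k$ it equals the constant $k!$; if $s > k$ it equals $\frac{s!}{(s-k)!}(\omega-j)^{s-k}$, which vanishes at $\omega = j$. In all three cases this is precisely $k!\,\delta_{s,k}$, which gives the claim. The constraint $k \le 2p-1$ guarantees that the $k$-th derivative operator does not annihilate the entire interpolation space spuriously, and the constraint $s \le 2p-1$ is exactly what is needed for the polynomial-reproduction argument in the first step; both hypotheses are therefore used and the range $j = -p+1,\dots,p$ plays no special role beyond ensuring $j$ is in the admissible evaluation set.

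I do not expect a genuine obstacle here — the only subtlety worth stating carefully is the polynomial-reproduction property of the non-centered Lagrange basis $\ell_{p,r}$ (analogous to the centered case in \eqref{eq:IntPolyCent}–\eqref{eq:IntPolyCentDer}), namely that $\sum_{r=-p+1}^p \ell_{p,r}(\omega)\, q(r) = q(\omega)$ for every $q \in \mathbb{P}_{2p-1}$, which follows because both sides are polynomials of degree $\le 2p-1$ agreeing at the $2p$ nodes. Once that is invoked, applying it with $q(\cdot) = (\,\cdot - j)^s$ and differentiating $k$ times under the (finite) sum completes the proof.
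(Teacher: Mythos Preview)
Your proposal is correct and follows essentially the same approach as the paper: both arguments use that the $2p$-point Lagrange basis $\{\ell_{p,r}\}$ reproduces the polynomial $\omega\mapsto(\omega-j)^s$ exactly for $s\le 2p-1$, then differentiate $k$ times and evaluate at $\omega=j$ to obtain $k!\,\delta_{s,k}$. The paper phrases this via the operator $\mathcal{Q}_i$ on a unit mesh centered at $x_i=0$, but the substance is identical.
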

\begin{proof}
We consider a mesh centered around $x_i = 0$ with spatial size $\Delta x = 1$. The operator $\mathcal{Q}_i$ exactly {interpolates} the polynomial function $\varphi(x) := (x-j)^s$ for $s = 0, \dots, 2p-1$ such that Eq.~\eqref{eq:IntPolyNonCent} becomes
\[
(x-j)^s = \mathcal{Q}_i \varphi(x) = \sum\limits_{r=-p+1}^p \ell_{p,r}(x)\varphi(r) = \sum\limits_{r=-p+1}^p \ell_{p,r}(x)(r-j)^s \, .
\]
Deriving the above relation $k$ times in $x$ and thereafter evaluating in $x = j$ gives the result. $\newline~\hfill \square$
\end{proof}

Now we can provide a consistency proof for the CAT procedure in Alg.~\ref{alg:MDRKCAT}. To this purpose we establish the following notation for the exact Taylor approximation in time of order $k \in \mathbb{N}$,
\begin{equation}\label{eq:exactTaylorTime}
T^k_{i,j}(\tau) := \sum_{m=0}^k \frac{\tau^m}{m!}\partial_t^m w(x_{i+j},t^n) \, .
\end{equation}
It is the $k$-th order approximation of $w(x_{i+j},t^{n}+\tau)$. The proof itself is built in a similar fashion as \cite[Theorem~2]{CarrilloPares2019} and \cite[Proposition~1]{ZorioEtAl}.
\begin{lemma}\label{lem:cat-accuracy-k=1}
For $k=1$ the steps of the CAT algorithm (Alg.~\ref{alg:MDRKCAT}, right side) satisfy
\begin{align*}
\widetilde{w}^{(1)}_{i,j} &= {\partial_t w}_{i+j} + \xi_{p,j}^{(1)}\big(R^{(1)}_{w}\big)_{i}\cdot{\Delta x}^{2p-1}  + \mathcal{O}({\Delta x}^{2p}) \, , \\[2pt]
\widetilde{f}^{(1)}_{i,j} &= {\partial_t f(w)}_{i+j} + \eta_p^{(1)} \big(R^{(1)}_{f,t}\big)_{i+j} \cdot {\Delta t}^{2p-1} 
+ \xi_{p,j}^{(1)}\big(R^{(1)}_{f,x}\big)_{i} \cdot {\Delta x}^{2p-1} + \mathcal{O}({\Delta x}^{2p}) \, ,
\end{align*}
with real-valued coefficients $\xi_{p,j}^{(1)}$, $\eta_p^{(1)}$; and smooth functions $\big(R^{(1)}_{w}\big)_{i}$, $\big(R^{(1)}_{f,t}\big)_{i+j}$, $\big(R^{(1)}_{f,x}\big)_{i}$. (Please note again that $(\cdot)_i$ stands for function-evaluation at $x = x_i$.) 
%
\end{lemma}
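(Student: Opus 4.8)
The plan is to mimic the strategy of \cite[Theorem~2]{CarrilloPares2019} and \cite[Proposition~1]{ZorioEtAl}: Taylor-expand the quantities produced by the CAT recursion in Alg.~\ref{alg:MDRKCAT} and collapse the sums with the identity of La.~\ref{lem:LagrSumLemma}. For the first estimate, recall that for $k=1$ the recursion sets $\widetilde{w}^{(1)}_{i,j} = -Q_j^{(1)}\widetilde{\mathbf{f}}^{(0)}_{i,\langle 0\rangle} = -\frac{1}{\Delta x}\sum_{r=-p+1}^p\gamma^{1,j}_{p,r}\,f(w(x_{i+r},t^n))$ once the exact solution is inserted. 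Setting $g(x):=f(w(x,t^n))$ and Taylor-expanding $g(x_{i+r})=g(x_{i+j}+(r-j)\Delta x)$ to order $2p$ about $x_{i+j}$, La.~\ref{lem:LagrSumLemma} with $k=1$ annihilates every monomial $(r-j)^s$ with $s\in\{0,2,3,\dots,2p-1\}$ and reproduces exactly $g'(x_{i+j})$ from $s=1$; hence $\widetilde{w}^{(1)}_{i,j} = -g'(x_{i+j}) + \xi^{(1)}_{p,j}\,g^{(2p)}(x_{i+j})\,{\Delta x}^{2p-1} + \mathcal{O}({\Delta x}^{2p})$ with $\xi^{(1)}_{p,j} = -\tfrac{1}{(2p)!}\sum_r\gamma^{1,j}_{p,r}(r-j)^{2p}$. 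The Cauchy--Kovalevskaya relation $\partial_t w=-\partial_x f(w)$ identifies $-g'(x_{i+j})={\partial_t w}_{i+j}$, and freezing the smooth factor $g^{(2p)}$ at $x_i$ (an $\mathcal{O}({\Delta x})$ perturbation absorbed into the remainder) yields the first claim with $\big(R^{(1)}_{w}\big)_i=\partial_x^{2p}f(w)(x_i,t^n)$.

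For the second estimate, recall that for $k=1$ the CAT step computes $\widetilde{f}^{(1)}_{i,j} = Q_0^{(1)}(\mathfrak{f}_T)^{1,\langle n\rangle}_{i,j} = \frac{1}{\Delta t}\sum_{r=-p+1}^p\gamma^{1,0}_{p,r}\,f\big(w(x_{i+j},t^n)+r\Delta t\,\widetilde{w}^{(1)}_{i,j}\big)$. Write $\widetilde{w}^{(1)}_{i,j}={\partial_t w}_{i+j}+E_{i,j}$ with $E_{i,j}=\xi^{(1)}_{p,j}\big(R^{(1)}_{w}\big)_i{\Delta x}^{2p-1}+\mathcal{O}({\Delta x}^{2p})$ from the first part, and expand the flux argument in the small perturbation $r\Delta t\,E_{i,j}=\mathcal{O}({\Delta x}^{2p})$. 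Since the quadratic remainder then contributes only $\mathcal{O}({\Delta x}^{2p})$ after division by $\Delta t$, one is left with
\[
\widetilde{f}^{(1)}_{i,j} = \underbrace{\tfrac{1}{\Delta t}\sum_r\gamma^{1,0}_{p,r}\,\phi(r\Delta t)}_{\text{(I)}} + \underbrace{\sum_r\gamma^{1,0}_{p,r}\,r\,f'\big(T^1_{i,j}(r\Delta t)\big)\,E_{i,j}}_{\text{(II)}} + \mathcal{O}({\Delta x}^{2p}), \qquad \phi(\tau):=f\big(T^1_{i,j}(\tau)\big),
\]
where $T^1_{i,j}$ is the exact first-order time-Taylor polynomial of \eqref{eq:exactTaylorTime}.

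For (I), Taylor-expanding $\phi(r\Delta t)$ to order $2p$ about $\tau=0$ and applying La.~\ref{lem:LagrSumLemma} with $k=1$, $j=0$ gives $\phi'(0)+\eta^{(1)}_p\,\phi^{(2p)}(0)\,{\Delta t}^{2p-1}+\mathcal{O}({\Delta t}^{2p})$ with $\eta^{(1)}_p=\tfrac{1}{(2p)!}\sum_r\gamma^{1,0}_{p,r}r^{2p}$; here $\phi'(0)=f'(w(x_{i+j},t^n))\,{\partial_t w}_{i+j}={\partial_t f(w)}_{i+j}$, and $\phi^{(2p)}(0)=f^{(2p)}(w(x_{i+j},t^n))({\partial_t w}_{i+j})^{2p}$ is smooth, which we name $\big(R^{(1)}_{f,t}\big)_{i+j}$. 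For (II), since $f'\big(T^1_{i,j}(r\Delta t)\big)=f'(w(x_{i+j},t^n))+\mathcal{O}({\Delta x})$ and $\sum_r\gamma^{1,0}_{p,r}r=1$ by La.~\ref{lem:LagrSumLemma}, term (II) equals $f'(w(x_{i+j},t^n))\,E_{i,j}+\mathcal{O}({\Delta x}^{2p})$, and freezing $f'(w)$ at $x_i$ turns this into $\xi^{(1)}_{p,j}\big(R^{(1)}_{f,x}\big)_i{\Delta x}^{2p-1}+\mathcal{O}({\Delta x}^{2p})$ with $\big(R^{(1)}_{f,x}\big)_i:=f'(w(x_i,t^n))\,\big(R^{(1)}_{w}\big)_i$. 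Adding (I) and (II) and using $\mathcal O(\Delta t)=\mathcal O(\Delta x)$ gives the second claim.

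The main obstacle is the bookkeeping of the two independent error channels: the spatial-interpolation error carries a $j$-dependent coefficient $\xi^{(1)}_{p,j}$ but a function frozen at $x_i$, whereas the temporal Taylor-truncation/finite-difference error has a $j$-independent constant $\eta^{(1)}_p$ but a function evaluated at $x_{i+j}$. One must check that every cross term and every quadratic remainder — after the division by $\Delta t$ and after moving frozen evaluations from $x_{i+j}$ to $x_i$ — genuinely lands in $\mathcal{O}({\Delta x}^{2p})$, which is where the extra factor $r\Delta t$ multiplying $E_{i,j}$ is decisive.
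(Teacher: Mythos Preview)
Your proof is correct and follows essentially the same route as the paper's own argument: Taylor-expand $f(w(x_{i+r},t^n))$ about $x_{i+j}$ and collapse with La.~\ref{lem:LagrSumLemma} to obtain the $\widetilde w^{(1)}_{i,j}$ estimate, then linearize $f$ about $T^1_{i,j}(r\Delta t)$ in the small perturbation $r\Delta t\,E_{i,j}$, apply La.~\ref{lem:LagrSumLemma} again (once to the unperturbed part, once via $\sum_r\gamma^{1,0}_{p,r}r=1$ to the linear correction), and freeze the smooth coefficients at $x_i$. The only cosmetic difference is that the paper first isolates $(\mathfrak{f}_T)^{1,n+r}_{i,j}$ with remainder $\mathcal O(\Delta x^{2p+1})$ and then divides by $\Delta t$, whereas you carry the $1/\Delta t$ through from the start; also, the paper records $\big(R^{(1)}_{f,t}\big)_{i+j}=\tfrac{\mathrm d^{2p}}{\mathrm d\tau^{2p}}(f\circ T^1_{i,j})(0)$ while you write out its explicit value $f^{(2p)}(w_{i+j})(\partial_t w_{i+j})^{2p}$, which agree because $T^1_{i,j}$ is affine in $\tau$.
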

\begin{proof}
A straightforward computation on $\widetilde{w}^{(1)}_{i,j}$, using the Taylor expansion of $f(w)$,  reveals that 
\begin{align*}
\widetilde{w}^{(1)}_{i,j} &= - Q_j^{(1)}\widetilde{\mathbf{f}}^{(0)}_{i,\langle 0 \rangle}  
	= - \frac{1}{{\Delta x}} \sum\limits_{r=-p+1}^p \gamma_{p,r}^{1,j}\widetilde{f}^{(0)}_{i,r} \\
	&= - \frac{1}{{\Delta x}} \sum\limits_{r=-p+1}^p \gamma_{p,r}^{1,j}f(w(x_{i+r},t^n)) \\
	&= - \frac{1}{{\Delta x}} \sum\limits_{r=-p+1}^p \gamma_{p,r}^{1,j} \left( \sum\limits_{s=0}^{2p} \frac{\left((r-j)\Delta x\right)^s}{s!} {\partial_x^s f(w)}_{i+j}  + \mathcal{O}({\Delta x}^{2p+1})\right)  \\
	&= - \frac{1}{{\Delta x}}\sum\limits_{s=0}^{2p-1}\frac{{\Delta x}^s}{s!} {\partial_x^s f(w)}_{i+j} \left( \sum\limits_{r=-p+1}^p \gamma_{p,r}^{1,j} (r-j)^s \right)  \\
	& \qquad  - \left[ \frac{1}{(2p)!} \left( \sum\limits_{r=-p+1}^p \gamma_{p,r}^{1,j} (r-j)^{2p} \right) {\partial_x^{2p}f(w)}_i + \mathcal{O}({\Delta x}) \right]{\Delta x}^{2p-1}  \\[0.5em]
	& \qquad + \mathcal{O}({\Delta x}^{2p})  \\[0.5em]
	& \refEqual{La. \ref{lem:LagrSumLemma}} -{\partial_x f(w)}_{i+j} + \xi_{p,j}^{(1)}\big(R^{(1)}_{w}\big)_{i}\cdot{\Delta x}^{2p-1} + \mathcal{O}({\Delta x}^{2p}) \\[0.5em]
	&= {\partial_t w}_{i+j} + \xi_{p,j}^{(1)}\big(R^{(1)}_{w}\big)_{i}\cdot{\Delta x}^{2p-1} + \mathcal{O}({\Delta x}^{2p})  \, ,
\end{align*}
in which we define $\xi_{p,j}^{(1)}$ and $\big(R^{(1)}_{w}\big)_{i}$ by 
\begin{align*}
 &\xi_{p,j}^{(1)} := \frac{-1}{(2p)!}\sum\limits_{r=-p+1}^p \gamma_{p,r}^{1,j}(r-j)^{2p} \, , \quad\big(R^{(1)}_{w}\big)_{i} := {\partial_x^{2p} f(w)}_{i}.
\end{align*}

The succeeding step of the method evaluates the flux $f$ in an approximate Taylor series. We find,
\begin{align*}
\left(\mathfrak{f}_T\right)^{1,n+r}_{i,j} &= f\left(w(x_{i+j},t^n) + (r\Delta t) \widetilde{w}^{(1)}_{i,j}\right) 	\\
&\refEqual{\eqref{eq:exactTaylorTime}} f\left(T^1_{i,j}(r{\Delta t}) + (r\Delta t)\xi_{p,j}^{(1)}\big(R^{(1)}_{w}\big)_{i}\cdot{\Delta x}^{2p-1} + \mathcal{O}({\Delta x}^{2p+1})\right) \\[2pt]
&= (f\circ T^1_{i,j})(r{\Delta t}) \\
& \qquad + (f'\circ T^1_{i,j})(r{\Delta t})\cdot\left[(r\Delta t)\xi_{p,j}^{(1)}\big(R^{(1)}_{w}\big)_{i} \right]\cdot{\Delta x}^{2p-1} + \mathcal{O}({\Delta x}^{2p+1}) \\[2pt]
&= (f\circ T^1_{i,j})(r{\Delta t}) \\
& \qquad + (f'\circ T^1_{i,j})(0)\cdot\left[(r\Delta t)\xi_{p,j}^{(1)}\big(R^{(1)}_{w}\big)_{i} \right]\cdot{\Delta x}^{2p-1} + \mathcal{O}({\Delta x}^{2p+1}) \\[2pt]
&= (f\circ T^1_{i,j})(r{\Delta t}) 
 + {f'(w)}_i \left[(r\Delta t)\xi_{p,j}^{(1)}\big(R^{(1)}_{w}\big)_{i} \right]\cdot{\Delta x}^{2p-1} + \mathcal{O}({\Delta x}^{2p+1}) \, . 
\end{align*}
Consequently, we can find for the temporal interpolation of the fluxes:
\begin{align*}
\widetilde{f}^{(1)}_{i,j} &= Q_{0}^{(1)}\left(\bm{\mathfrak{f}_T}\right)^{1,\langle n \rangle}_{i,j}
= \frac{1}{{\Delta t}} \sum\limits_{r=-p+1}^p \gamma_{p,r}^{1,0}\left(\mathfrak{f}_T\right)^{1,n+r}_{i,j} \\
&= \frac{1}{{\Delta t}} \sum\limits_{r=-p+1}^p \gamma_{p,r}^{1,0}\left[\sum\limits_{s=0}^{2p} \left( \frac{(r{\Delta t})^s}{s!}\frac{\mathrm{d}^{s}(f\circ T^1_{i,j})}{{\mathrm{d}\tau}^{s}}(0) \right) + \mathcal{O}({\Delta x}^{2p+1})\right] \\
& \qquad + {f'(w)}_i \underbrace{\left( \sum\limits_{r=-p+1}^p \gamma_{p,r}^{1,0}r \right)}_{=1, \text{~La.~} \ref{lem:LagrSumLemma}} \xi_{p,j}^{(1)}\big(R^{(1)}_{w}\big)_{i}\cdot{\Delta x}^{2p-1} 
+ \mathcal{O}({\Delta x}^{2p}) \\
&=  \frac{1}{{\Delta t}} \sum\limits_{s=0}^{2p-1} \frac{{\Delta t}^s}{s!}\underbrace{\left( \sum\limits_{r=-p+1}^p \gamma_{p,r}^{1,0}r^s \right)}_{=\delta_{1,s}, \text{~La.~} \ref{lem:LagrSumLemma}} \frac{\mathrm{d}^{s}(f\circ T^1_{i,j})}{{\mathrm{d}\tau}^{s}}(0) \\
& \qquad + \underbrace{\frac{1}{(2p)!}\left( \sum\limits_{r=-p+1}^p \gamma_{p,r}^{1,0}r^{2p} \right)}_{=:\eta_p^{(1)}}\underbrace{\frac{\mathrm{d}^{2p}(f\circ T^1_{i,j})}{{\mathrm{d}\tau}^{2p}}(0)}_{=:\big(R^{(1)}_{f,t}\big)_{i+j}} \cdot{\Delta t}^{2p-1} \\[0.5em]
& \qquad + \xi_{p,j}^{(1)}\underbrace{{f'(w)}_i\big(R^{(1)}_{w}\big)_{i}}_{=:\big(R^{(1)}_{f,x}\big)_{i} }\cdot{\Delta x}^{2p-1} + \mathcal{O}({\Delta x}^{2p}) \\[0.5em]
&= \frac{\mathrm{d}(f\circ T^1_{i,j})}{{\mathrm{d}\tau}}(0) + \eta_p^{(1)} \big(R^{(1)}_{f,t}\big)_{i+j} \cdot {\Delta t}^{2p-1} + \xi_{p,j}^{(1)}\big(R^{(1)}_{f,x}\big)_{i} \cdot {\Delta x}^{2p-1} + \mathcal{O}({\Delta x}^{2p}) \\[2pt]
&= {\partial_t f(w)}_{i+j} + \eta_p^{(1)} \big(R^{(1)}_{f,t}\big)_{i+j} \cdot {\Delta t}^{2p-1} + \xi_{p,j}^{(1)}\big(R^{(1)}_{f,x}\big)_{i} \cdot {\Delta x}^{2p-1} + \mathcal{O}({\Delta x}^{2p}).
\end{align*}
 $~\hfill \square$
\end{proof}

Via induction one can generalize this result.
\begin{lemma}\label{lem:cat-accuracy-k>1}
For $k=2, \dots 2p-1$ the steps of the CAT algorithm (Alg.~\ref{alg:MDRKCAT}, right side) satisfy
\begin{align*}
\widetilde{w}^{(k)}_{i,j} &= {\partial_t^k w}_{i+j} + \xi_{p,j}^{(k)}\big(R^{(k)}_{w}\big)_{i}\cdot{\Delta x}^{2p-k}  + \mathcal{O}({\Delta x}^{2p-k+1}) \, , \\[2pt]
\widetilde{f}^{(k)}_{i,j} &= {\partial_t^k f(w)}_{i+j} + \eta_p^{(k)} \big(R^{(k)}_{f,t}\big)_{i+j} \cdot {\Delta t}^{2p-k} \\
& \hspace{2.04cm} + \xi_{p,j}^{(k)}\big(R^{(k)}_{f,x}\big)_{i} \cdot {\Delta x}^{2p-k} + \mathcal{O}({\Delta x}^{2p-k+1}) \, ,
\end{align*}
with 
\begin{align*}
\eta_p^{(k)} &:= \frac{1}{(2p)!}\left( \sum\limits_{r=-p+1}^p \gamma_{p,r}^{k,0}r^{2p} \right), \qquad
\xi_{p,j}^{(k)} := -\sum\limits_{r=-p+1}^p \gamma_{p,r}^{1,j}\xi_{p,r}^{(k-1)} \\  \big(R^{(k)}_{w}\big)_{i} &:= \big({f'(w)}_i\big)^{k-1}{\partial_x^{2p} f(w)}_{i}, \qquad 
\big(R^{(k)}_{f,t}\big)_{i+j} := \frac{\mathrm{d}^{2p}(f\circ T^k_{i,j})}{{\mathrm{d}\tau}^{2p}}(0), \\\big(R^{(k)}_{f,x}\big)_{i} &:= {f'(w)}_i \big(R^{(k)}_{w}\big)_{i}  \, .
\end{align*}
\end{lemma}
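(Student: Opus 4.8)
The plan is to run an induction on $k$, the base case $k=1$ being precisely La.~\ref{lem:cat-accuracy-k=1}. For the inductive step, fix $2\le k\le 2p-1$ and assume both expansions hold for all orders $1\le m\le k-1$. Following the two halves of the CAT block in Alg.~\ref{alg:MDRKCAT}, I would first derive the expansion of $\widetilde w^{(k)}_{i,j}$ from the inductive expansion of $\widetilde f^{(k-1)}_{i,r}$, and then feed all the $\widetilde w^{(m)}_{i,j}$, $1\le m\le k$, into the flux step to obtain the expansion of $\widetilde f^{(k)}_{i,j}$. Throughout, the $C^\infty$ assumption legitimises all Taylor expansions, and $\mathcal{O}(\Delta t)=\mathcal{O}(\Delta x)$ is used to trade powers of $\Delta t$ for powers of $\Delta x$.

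For the $\widetilde w$-step, write $\widetilde w^{(k)}_{i,j}=-Q^{(1)}_j\widetilde{\mathbf f}^{(k-1)}_{i,\langle 0\rangle}=-\tfrac{1}{\Delta x}\sum_{r}\gamma^{1,j}_{p,r}\widetilde f^{(k-1)}_{i,r}$ and substitute the inductive formula for $\widetilde f^{(k-1)}_{i,r}$. The exact part $-\tfrac{1}{\Delta x}\sum_r\gamma^{1,j}_{p,r}\partial_t^{k-1}f(w)_{i+r}$ is handled by a spatial Taylor expansion about $x_{i+j}$ plus La.~\ref{lem:LagrSumLemma} with derivative order $1$, which collapses it to $-\partial_x\partial_t^{k-1}f(w)_{i+j}+\mathcal{O}(\Delta x^{2p-1})$; Cauchy-Kovalevskaya turns the leading term into $\partial_t^k w_{i+j}$. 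The $\Delta t^{2p-k+1}$- and $\mathcal{O}(\Delta x^{2p-k+2})$-contributions of $\widetilde f^{(k-1)}_{i,r}$ become, after $-\tfrac{1}{\Delta x}\sum_r\gamma^{1,j}_{p,r}$, a bounded difference quotient of the smooth $R^{(k-1)}_{f,t}$ times $\Delta t^{2p-k+1}=\mathcal{O}(\Delta x^{2p-k+1})$, respectively an $\mathcal{O}(\Delta x^{2p-k+1})$ term; and since $k\ge 2$, the $\mathcal{O}(\Delta x^{2p-1})$ above is also $\mathcal{O}(\Delta x^{2p-k+1})$. The only surviving leading error is \emph{inherited} from the $\Delta x^{2p-k+1}$-term of $\widetilde f^{(k-1)}_{i,r}$: since its factor $(R^{(k-1)}_{f,x})_i$ has no $r$-dependence, it yields $-\big(\sum_r\gamma^{1,j}_{p,r}\xi^{(k-1)}_{p,r}\big)(R^{(k-1)}_{f,x})_i\,\Delta x^{2p-k}$. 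Setting $\xi^{(k)}_{p,j}:=-\sum_r\gamma^{1,j}_{p,r}\xi^{(k-1)}_{p,r}$ and using $(R^{(k-1)}_{f,x})_i=f'(w)_i(R^{(k-1)}_w)_i=(f'(w)_i)^{k-1}\partial_x^{2p}f(w)_i=:(R^{(k)}_w)_i$ closes this half.

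For the $\widetilde f$-step, write $\widetilde f^{(k)}_{i,j}=Q^{(k)}_0(\bm{\mathfrak{f}_T})^{k,\langle n\rangle}_{i,j}=\tfrac{1}{\Delta t^k}\sum_r\gamma^{k,0}_{p,r}(\mathfrak{f}_T)^{k,n+r}_{i,j}$. The argument of $f$ in $(\mathfrak{f}_T)^{k,n+r}_{i,j}$ is $T^k_{i,j}(r\Delta t)+\sum_{m=1}^k\tfrac{(r\Delta t)^m}{m!}\big(\widetilde w^{(m)}_{i,j}-\partial_t^m w_{i+j}\big)$, and the perturbation is $\mathcal{O}(\Delta x^{2p})$, so a first-order Taylor expansion of $f$ about $T^k_{i,j}(r\Delta t)$ suffices (the quadratic remainder is $\mathcal{O}(\Delta x^{4p})$, negligible after the $1/\Delta t^k$ scaling). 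The main part $\tfrac{1}{\Delta t^k}\sum_r\gamma^{k,0}_{p,r}(f\circ T^k_{i,j})(r\Delta t)$ is expanded in $\tau$ and reduced by La.~\ref{lem:LagrSumLemma} with derivative order $k$ to $\tfrac{d^k}{d\tau^k}(f\circ T^k_{i,j})(0)+\eta^{(k)}_p\tfrac{d^{2p}}{d\tau^{2p}}(f\circ T^k_{i,j})(0)\,\Delta t^{2p-k}+\mathcal{O}(\Delta t^{2p-k+1})$ with $\eta^{(k)}_p=\tfrac{1}{(2p)!}\sum_r\gamma^{k,0}_{p,r}r^{2p}$; since $T^k_{i,j}$ agrees with $w(x_{i+j},t^n+\cdot)$ to order $k+1$, the first term is $\partial_t^k f(w)_{i+j}$ and the second defines $(R^{(k)}_{f,t})_{i+j}$. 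In the propagated-error part, replacing $f'(T^k_{i,j}(r\Delta t))$ by $f'(w)_{i+j}$ costs $\mathcal{O}(\Delta x^{2p-k+1})$, and $\tfrac{1}{\Delta t^k}\sum_r\gamma^{k,0}_{p,r}\tfrac{(r\Delta t)^m}{m!}=\delta_{m,k}$ by La.~\ref{lem:LagrSumLemma}, so only $m=k$ contributes, leaving $f'(w)_{i+j}\xi^{(k)}_{p,j}(R^{(k)}_w)_i\,\Delta x^{2p-k}=\xi^{(k)}_{p,j}(R^{(k)}_{f,x})_i\,\Delta x^{2p-k}+\mathcal{O}(\Delta x^{2p-k+1})$ with $(R^{(k)}_{f,x})_i:=f'(w)_i(R^{(k)}_w)_i$. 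Collecting the three pieces completes the induction.

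The main obstacle is the $\mathcal{O}$-bookkeeping rather than any single estimate: one has to verify, for each $k\ge 2$, that the intrinsic discretisation errors of $Q^{(1)}_j$ and $Q^{(k)}_0$ and every cross term $\Delta t^a\Delta x^b$ produced by linearising $f$ are of strictly higher order than $\Delta x^{2p-k}$ and thus vanish into the remainder, so that the leading error of $\widetilde w^{(k)}$ is the \emph{propagated} $\xi$-error of $\widetilde f^{(k-1)}$, while that of $\widetilde f^{(k)}$ is the sum of this propagated $\xi$-error and the intrinsic $\eta$-error of $Q^{(k)}_0$. A second delicate point is the recursive closure of the remainder functions, namely spotting the identity $(R^{(k-1)}_{f,x})_i=(R^{(k)}_w)_i$; and a third is that La.~\ref{lem:LagrSumLemma} must be used twice, with derivative orders $1$ and $k$, which is precisely where the hypothesis $k\le 2p-1$ (keeping all relevant polynomial degrees within $\{0,\dots,2p-1\}$) is consumed. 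Everything else reduces to routine Taylor expansions.
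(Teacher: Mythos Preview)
Your proposal is correct and follows exactly the route the paper indicates: the paper does not spell out a proof of this lemma but only remarks ``Via induction one can generalize this result'' after proving La.~\ref{lem:cat-accuracy-k=1}, and your argument is precisely the natural inductive extension of that $k=1$ proof, with the same use of La.~\ref{lem:LagrSumLemma} (once for $Q^{(1)}_j$ and once for $Q^{(k)}_0$) and the same Taylor-plus-linearisation bookkeeping.
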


\section{von Neumann stability of $\methodcat{\mathtt{r}}{q}{\mathtt{s}}$ methods}\label{sec:vonNeumann}
The original CAT procedure was developed with the intention to create a scheme which linearly reduces back to high-order Lax-Wendroff methods. As a consequence, the scheme is CFL-1 stable for linear equations \cite[Theorem~1]{CarrilloPares2019}. 
In this section, we discuss the stability properties of the $\methodcat{\mathtt{r}}{q}{\mathtt{s}}$ scheme presented in this work. 
\begin{theorem}[MDRK-LW scheme]\label{thm:MDRK-linearReduction}
Explicit $\methodcat{\mathtt{r}}{q}{\mathtt{s}}$ methods for the linear advection flux $f(w) = \alpha w$ reduce to the numerical scheme 
\begin{subequations}\label{eq:MDRK-HighOrderLW}
\begin{align}
w^{n,{\color{BlueGreen}l}}_j &= w^n_j + \sum\limits_{k=1}^{\mathtt{r}}(-1)^k \alpha^k {\Delta t}^k \left( \sum\limits_{{\color{VioletRed}\nu}=1}^{{\color{BlueGreen}l}-1} a_{{\color{BlueGreen}l}{\color{VioletRed}\nu}}^{(k)} P^{(k)}\mathbf{w}^{n,{\color{VioletRed}\nu}}_{\langle j \rangle} \right) \qquad {\color{BlueGreen}l} = 1,\dots,\mathtt{s} \, , \label{eq:MDRK-HighOrderLW-stages} \\
w^{n+1}_j &= w^n_j + \sum\limits_{k=1}^{\mathtt{r}}(-1)^k \alpha^k {\Delta t}^k \left( \sum\limits_{{\color{BlueGreen}l}=1}^{\mathtt{s}} b^{(k)}_{\color{BlueGreen}l} P^{(k)}\mathbf{w}^{n,{\color{BlueGreen}l}}_{\langle j \rangle} \right) \, , \label{eq:MDRK-HighOrderLW-update}
\end{align}
\end{subequations}
with $P^{(k)}\mathbf{w}^{n,{\color{BlueGreen}l}}_{\langle j \rangle}$ the centered difference approximation of the $k$-th spatial derivative. \end{theorem}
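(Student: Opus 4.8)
The plan is to prove that, for the linear flux $f(w)=\alpha w$, the identity \eqref{eq:MDRK-HighOrderLW} holds \emph{exactly} at the discrete level, by tracking what Alg.~\ref{alg:MDRKCAT} does once every flux evaluation inside it is affine. First I would observe that for the linear flux $f'\equiv\alpha$ and $f''=f'''=\cdots=0$, so that in the CAT sub-procedure run with base array $w^{n,\nu}$ one has
\[
\left(\mathfrak{f}_T\right)^{k-1,n+r}_{i,j}=\alpha\,w^{n,\nu}_{i+j}+\alpha\sum_{m=1}^{k-1}\frac{(r\Delta t)^m}{m!}\,\widetilde{w}^{(m)}_{i,j},
\]
a polynomial in $r$ of degree $\le k-1$. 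Applying $Q_{0}^{(k-1)}$ to it and invoking La.~\ref{lem:LagrSumLemma} with $j=0$ (so that $\sum_{r}\gamma^{k-1,0}_{p,r}r^{s}=(k-1)!\,\delta_{s,k-1}$ for $0\le s\le 2p-1$, which picks out the top coefficient) gives $\widetilde{f}^{(k-1)}_{i,j}=\alpha\,\widetilde{w}^{(k-1)}_{i,j}$. Substituting this into $\widetilde{w}^{(m)}_{i,j}=-Q_{j}^{(1)}\widetilde{\mathbf{f}}^{(m-1)}_{i,\langle0\rangle}$ and using that $Q_{j}^{(1)}$ is \emph{exact} on polynomials of degree $\le 2p-1$, an induction on $m$ (base case $m=0$ trivial; inductive step noting $(\mathcal{Q}_i w^{n,\nu})^{(m-1)}$ has degree $2p-m\le 2p-1$) yields
\[
\widetilde{w}^{(m)}_{i,j}=(-\alpha)^m\,(\mathcal{Q}_i w^{n,\nu})^{(m)}(x_{i+j}),\qquad
\widetilde{f}^{(m)}_{i,j}=(-1)^m\alpha^{m+1}\,(\mathcal{Q}_i w^{n,\nu})^{(m)}(x_{i+j})
\]
for all $m=0,\dots,\mathtt{r}-1$; since $\mathtt{r}\le 2p$ all indices stay in range.

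Second, and this is the step I expect to be the main obstacle, I would show that the half-way flux difference reproduces the centered derivative,
\[
\frac{\Lambda^{(0)}(\widetilde{\mathbf{f}}^{\nu})^{(k-1)}_{i,\langle0\rangle}-\Lambda^{(0)}(\widetilde{\mathbf{f}}^{\nu})^{(k-1)}_{i-1,\langle0\rangle}}{\Delta x}=(-1)^{k-1}\alpha^{k}\,P^{(k)}\mathbf{w}^{n,\nu}_{\langle i\rangle}.
\]
Both sides are linear functionals of $(w^{n,\nu}_{i-p},\dots,w^{n,\nu}_{i+p})\in\mathbb{R}^{2p+1}$, so it suffices to test them on the monomial data $w^{n,\nu}_{i+s}=s^{m}$, $m=0,\dots,2p$. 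For $m\le 2p-1$ this data is the restriction of the global polynomial $\phi_m(x):=((x-x_i)/\Delta x)^m$, so both shifted interpolants $\mathcal{Q}_i,\mathcal{Q}_{i-1}$ reproduce it exactly; by Step~1 the left-hand bracket then collapses to $(\Uplambda^{(0)}\phi_m^{(k-1)})(x_{i+1/2})-(\Uplambda^{(0)}\phi_m^{(k-1)})(x_{i-1/2})$, which by the splitting relation \eqref{eq:SplittingIntPolyCent} equals $\Delta x\,\phi_m^{(k)}(x_i)=k!\,\Delta x^{1-k}\delta_{m,k}$, matching $\Delta x\,P^{(k)}$ on the same data (the centered stencil satisfies $\sum_s\delta^k_{p,s}s^m=k!\,\delta_{m,k}$ for $0\le m\le 2p$ by exactness on $\mathbb{P}_{2p}$). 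For the remaining basis vector $m=2p$ the interpolants no longer coincide with $\phi_{2p}$, but the exact Newton error formula gives $\mathcal{Q}_i\phi_{2p}=\phi_{2p}-\Delta x^{-2p}\pi_i$ with $\pi_i(x):=\prod_{j=-p+1}^{p}(x-x_{i+j})$; the $\phi_{2p}$-part collapses as before to $\Delta x\,\phi_{2p}^{(k)}(x_i)$, while the $\pi_i$-part vanishes because $\pi_{i-1}(x)=\pi_i(x+\Delta x)$, hence $\pi_{i-1}^{(k-1)}(x_{(i-1)+j})=\pi_i^{(k-1)}(x_{i+j})$ and the two $\Lambda^{(0)}$-weighted sums cancel — again leaving exactly $\Delta x\,P^{(k)}$ applied to $s^{2p}$. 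This step is precisely where the composed operators $\Lambda^{(0)}\circ Q^{(1)}\circ\cdots$ must be seen to telescope into the single centered operator $P^{(k)}$; it is in essence the conservative-form reduction underlying \cite[Theorem~1]{CarrilloPares2019} and \cite{ZorioEtAl}, here needed termwise in $k$.

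Finally, I would substitute the identity of Step~2 into the conservative updates \eqref{eq:MDRKCAT-consLaw}--\eqref{eq:MDRKCAT-numFlux}. For the update this gives
\[
\frac{\Delta t}{\Delta x}\big(F^{n}_{i+1/2}-F^{n}_{i-1/2}\big)=\sum_{k=1}^{\mathtt{r}}\Delta t^{k}\sum_{l=1}^{\mathtt{s}}b^{(k)}_{l}(-1)^{k-1}\alpha^{k}\,P^{(k)}\mathbf{w}^{n,l}_{\langle i\rangle},
\]
so $w^{n+1}_i=w^n_i+\sum_{k}(-1)^k\alpha^k\Delta t^k\sum_l b^{(k)}_l P^{(k)}\mathbf{w}^{n,l}_{\langle i\rangle}$, which is \eqref{eq:MDRK-HighOrderLW-update}; replacing $b^{(k)}_{l}$ by $a^{(k)}_{l\nu}$ and summing over $\nu<l$ reproduces \eqref{eq:MDRK-HighOrderLW-stages} verbatim (with $w^{n,1}=w^n$, the stage-$1$ flux sum being empty). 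The Runge--Kutta coefficients pass through the linearization untouched, so no further work is required.
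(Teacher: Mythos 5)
Your proof is correct. Note that the paper itself gives no argument for this theorem --- it states only that ``the proof is similar to \cite{CarrilloPares2019}, Theorem~1'' --- so what you have written is a complete, self-contained version of the reduction the authors delegate to that reference, and your route is the natural one. Step~(i) is right: for an affine flux, $Q_0^{(k-1)}$ annihilates every power $r^s$ with $s\neq k-1$ by La.~\ref{lem:LagrSumLemma}, so the CAT recursion collapses to $\widetilde f^{(k-1)}_{i,j}=(-1)^{k-1}\alpha^k(\mathcal Q_i w^{n,\nu})^{(k-1)}(x_{i+j})$, the exactness of $Q_j^{(1)}$ on $\mathbb P_{2p-1}$ carrying the induction on $m$. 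The only place requiring genuine care is your step~(ii), the telescoping of the two $\Lambda^{(0)}$-sums into $P^{(k)}$, and you handle it correctly: for monomial data of degree $\le 2p-1$ both shifted interpolants coincide with the global polynomial and the splitting identity \eqref{eq:SplittingIntPolyCent} (used with $k=1$ and applied to $\phi_m^{(k-1)}$, whose degree stays $\le 2p$ so that $\mathcal P_i$ reproduces it) finishes the computation, while for the degree-$2p$ monomial the Newton remainders $\Delta x^{-2p}\pi_i$ and $\Delta x^{-2p}\pi_{i-1}$ cancel exactly because $\pi_{i-1}(x)=\pi_i(x+\Delta x)$ implies $\pi_{i-1}^{(k-1)}(x_{i-1+j})=\pi_i^{(k-1)}(x_{i+j})$; without this last observation the monomial test would be incomplete. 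One cosmetic point: the theorem writes $P^{(k)}\mathbf w^{n,{\color{BlueGreen}l}}_{\langle j\rangle}$ with the $2p$-point stencil notation of \eqref{eq:stencilFunction} even though $P^{(k)}$ acts on $\mathbb R^{2p+1}$; your reading of it as the full centered stencil $\{j-p,\dots,j+p\}$ is the intended one.
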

The proof is similar to \cite[Theorem~1]{CarrilloPares2019} and is hence left out. 
In this form it is possible to perform a von Neumann stability analysis (see for example \cite{QuarteroniNumA,2004_Strikwerda}). That is, we fill in the Fourier mode $w^n_j = \mathcal{W}(t^n)e^{\mathrm{i}\kappa x_j}$ with wave number $\kappa \in \mathbb{Z}$ and search for the the amplification factors via the relations
\[ 
\mathcal{W}(t^{n,{\color{BlueGreen}l}}) := g^{n,{\color{BlueGreen}l}}(\kappa) \mathcal{W}(t^n) \quad \text{and} \quad \mathcal{W}(t^{n+1}) := g(\kappa)\mathcal{W}(t^n) \, .
\]
Doing so gives an additional recurrence relation.
\begin{proposition}
The amplification factors obtained from a von Neumann analysis on the MDRK-LW scheme Eq.~\eqref{eq:MDRK-HighOrderLW} is defined by the recurrence relations
\begin{subequations}\label{eq:MDRK-amplificationFactors}
\begin{align}
g^{n,{\color{BlueGreen}l}}(\kappa) &= 1 + \sum\limits_{k=1}^{\mathtt{r}}(-1)^k \sigma^k ~\mathtt{P}^{(k)}(\kappa) \left( \sum\limits_{{\color{VioletRed}\nu}=1}^{{\color{BlueGreen}l}-1} a_{{\color{BlueGreen}l}{\color{VioletRed}\nu}}^{(k)} g^{n,{\color{VioletRed}\nu}}(\kappa) \right) \qquad {\color{BlueGreen}l} = 1,\dots,\mathtt{s} \, ,  \label{eq:MDRK-amplificationFactors-stages} \\
g(\kappa) &= 1 + \sum\limits_{k=1}^{\mathtt{r}}(-1)^k \sigma^k ~\mathtt{P}^{(k)}(\kappa) \left( \sum\limits_{{\color{BlueGreen}l}=1}^{\mathtt{s}} b^{(k)}_{\color{BlueGreen}l} g^{n,{\color{BlueGreen}l}}(\kappa)\right) \, , \label{eq:MDRK-amplificationFactors-update}
\end{align}
\end{subequations}
with wave number $\kappa \in \mathbb{Z}$, $\sigma := \frac{\alpha{\Delta t}}{{\Delta x}}$ the corresponding CFL number and 
\begin{equation}\label{eq:P^k-stability}
\mathtt{P}^{(k)}(\kappa) := \sum_{r=-p}^p \delta^k_{p,r} e^{\mathrm{i}r\kappa{\Delta x}} \, .
\end{equation}
\end{proposition}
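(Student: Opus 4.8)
The plan is to carry out the substitution prescribed by the von Neumann ansatz directly in the reduced scheme~\eqref{eq:MDRK-HighOrderLW} and simply read off the recursion; no deep argument is required, only linearity of the discrete derivative operators together with the translation-equivariance of a finite-difference stencil on a uniform grid. First I would insert $w^n_j = \mathcal{W}(t^n)e^{\mathrm{i}\kappa x_j}$ into the centered stencil entering $P^{(k)}\mathbf{w}^{n,{\color{BlueGreen}l}}_{\langle j \rangle}$. Since the spatial partition is uniform, $x_{j+r} = x_j + r\Delta x$, so every node contributes the common phase $e^{\mathrm{i}\kappa x_j}$ times the shift $e^{\mathrm{i}r\kappa\Delta x}$; recalling that $P^{(k)}$ acts as $\Delta x^{-k}\sum_{r=-p}^{p}\delta^k_{p,r}(\cdot)_r$, this gives
\[
P^{(k)}\mathbf{w}^{n,{\color{BlueGreen}l}}_{\langle j \rangle} = \frac{1}{\Delta x^{k}}\,\mathcal{W}(t^{n,{\color{BlueGreen}l}})\,e^{\mathrm{i}\kappa x_j}\sum_{r=-p}^{p}\delta^k_{p,r}e^{\mathrm{i}r\kappa\Delta x} = \frac{1}{\Delta x^{k}}\,\mathtt{P}^{(k)}(\kappa)\,\mathcal{W}(t^{n,{\color{BlueGreen}l}})\,e^{\mathrm{i}\kappa x_j},
\]
with $\mathtt{P}^{(k)}(\kappa)$ precisely as defined in~\eqref{eq:P^k-stability}. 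In other words $\mathtt{P}^{(k)}(\kappa)$ is the symbol of the centered discrete $k$-th derivative, and plane waves are eigenfunctions of $P^{(k)}$.

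Next I would substitute this identity into the stage relation~\eqref{eq:MDRK-HighOrderLW-stages}, replace $\mathcal{W}(t^{n,{\color{BlueGreen}l}})$ and $\mathcal{W}(t^{n,{\color{VioletRed}\nu}})$ by $g^{n,{\color{BlueGreen}l}}(\kappa)\mathcal{W}(t^n)$ and $g^{n,{\color{VioletRed}\nu}}(\kappa)\mathcal{W}(t^n)$ respectively, and cancel the common nonzero factor $\mathcal{W}(t^n)e^{\mathrm{i}\kappa x_j}$. The coefficient $\alpha^k\Delta t^k$ appearing in the scheme combines with the $\Delta x^{-k}$ produced by $P^{(k)}$ into $(\alpha\Delta t/\Delta x)^k = \sigma^k$, which yields exactly~\eqref{eq:MDRK-amplificationFactors-stages}. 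Performing the identical manipulation on the update~\eqref{eq:MDRK-HighOrderLW-update}, this time using $\mathcal{W}(t^{n+1}) = g(\kappa)\mathcal{W}(t^n)$, produces~\eqref{eq:MDRK-amplificationFactors-update}. I would also note that the empty inner sum for ${\color{BlueGreen}l}=1$ forces $g^{n,1}(\kappa)=1$, so that the recursion is well defined: because the MDRK scheme is explicit (the stage coupling is strictly lower triangular), each $g^{n,{\color{BlueGreen}l}}(\kappa)$ is obtained from the $g^{n,{\color{VioletRed}\nu}}(\kappa)$ with ${\color{VioletRed}\nu}<{\color{BlueGreen}l}$, and finally $g(\kappa)$ from all of them.

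I do not expect a genuine obstacle here — the statement is essentially a bookkeeping exercise. The only points that warrant care are: tracking the scaling correctly, i.e.\ recognising that the $\Delta x^{-k}$ hidden inside $P^{(k)}$ is exactly what converts $\alpha^k\Delta t^k$ into $\sigma^k$; and observing that the shift identity $e^{\mathrm{i}\kappa x_{j+r}} = e^{\mathrm{i}\kappa x_j}e^{\mathrm{i}r\kappa\Delta x}$ relies on the \emph{spatial} grid being uniform (unlike the temporal grid, cf.\ Remark~\ref{Rm:timestep}). A closing remark worth including is that, because each Fourier mode decouples under the linear flux, the relations~\eqref{eq:MDRK-amplificationFactors} are a genuinely closed scalar recursion in $\kappa$, which is what makes the subsequent stability analysis tractable.
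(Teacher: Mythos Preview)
Your proposal is correct and follows exactly the approach the paper takes: the paper does not give a formal proof but simply states that one inserts the Fourier mode $w^n_j = \mathcal{W}(t^n)e^{\mathrm{i}\kappa x_j}$ together with $\mathcal{W}(t^{n,{\color{BlueGreen}l}}) = g^{n,{\color{BlueGreen}l}}(\kappa)\mathcal{W}(t^n)$ and $\mathcal{W}(t^{n+1}) = g(\kappa)\mathcal{W}(t^n)$ into Eq.~\eqref{eq:MDRK-HighOrderLW} and reads off the recurrence. Your write-up spells out the details (the symbol computation for $P^{(k)}$, the combination $\alpha^k\Delta t^k/\Delta x^k = \sigma^k$, and the well-definedness of the explicit recursion) that the paper leaves implicit.
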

The term $\mathtt{P}^{(k)}(\kappa)$ can be interpreted as the $k$-th derivative of the centered $(2p+1)$-point Lagrangian interpolation Eq.~\eqref{eq:IntPolyCentDer} using Fourier basis $\left\{e^{\mathrm{i}l\chi} ~|~ l \in \mathbb{Z} \right\}$ with the grid frequency $\chi = \kappa \Delta x$.

In Tbl.~\ref{tab:cfl-ampFactor-MDRKschemes} we display the amplification factors $g(\kappa)$ for the considered MDRK schemes summarized earlier in Fig.~\ref{fig:stability-regions-MDRK}. Of main interest w.r.t. to these amplification factors is to obtain a critical CFL value $\sigma^* \in \mathbb{R}^+$ such that 
\[
|g(\kappa)| \leq 1 \quad \Leftrightarrow \quad \frac{\alpha{\Delta t}}{{\Delta x}} \leq \sigma^* \, .
\]
We have used the Symbolic Math Toolbox in \texttt{MATLAB} \cite{2020_matlabsymbolic} along with a bisection method on the CFL variable $\sigma$ in Eq.~\eqref{eq:MDRK-amplificationFactors-update} to numerically obtain a $\sigma^*$:
\begin{itemize}
\item In our approach we take $\Delta x = 1$; only the frequency of $g(\kappa)$ is influenced by $\Delta x$, there is no change in absolute value. Assume we would compare mesh sizes $\Delta x$ and $ \Delta \tilde x$, then
\[
\mathtt{P}^{(k)}(\kappa) = \sum\limits_{r=-p}^p \delta^k_{p,r} e^{\mathrm{i}r\kappa{\Delta x}} = \sum\limits_{r=-p}^p \delta^k_{p,r} e^{\mathrm{i}r\left(\kappa \frac{\Delta x}{\Delta \tilde x} \right)\Delta \chi} = \mathtt{P}^{(k)}(\widetilde{\kappa}) \, ,
\]
where $\widetilde{\kappa} := \kappa \frac{\Delta x}{\Delta \tilde x}$.  Thus behavior of $g(\kappa)$ is the same up to a recalibration of the frequency space.

\item Via the toolbox, the value $\max|g(\kappa)|$ is calculated on a uniform 1000-cell mesh of $\kappa \in [-\pi,\pi]$. This domain suffices for this purpose, since for $\Delta x = 1$ all terms $\mathtt{P}^{(k)}(\kappa)$ in Eq.~\eqref{eq:P^k-stability}  are $2\pi$-periodic. 
\end{itemize} 

\begin{table}[h!]
\centering
\caption{Amplification factors $g(\kappa)$ of the considered MDRK schemes (see Appendix \ref{app:ButherTableaux}). The CFL criteria (up to four decimals) obtained from $|g(\kappa)| \leq 1$ are also shown.}
\label{tab:cfl-ampFactor-MDRKschemes}
\resizebox{\textwidth}{!}{%
\begin{tabular}{l|c|c}
Scheme & $g(\kappa)$ & CFL \\ \hline
& &  \\[\dimexpr-\normalbaselineskip+0.4em]
$\methodcat{2}{3}{2}$ & $1 - \sigma~\mathtt{P}^{(1)} + \frac{1}{2}\sigma^2 \left( \frac{2}{3}\mathtt{P}^{(1)}\mathtt{P}^{(1)} + \frac{1}{3}\mathtt{P}^{(2)} \right) - \frac{1}{6} \sigma^3 ~\mathtt{P}^{(1)}\mathtt{P}^{(2)}$ & 1.2954 \\[1em]
$\methodcat{2}{4}{2}$ & $1 - \sigma~\mathtt{P}^{(1)} + \frac{1}{2} \sigma^2 ~\mathtt{P}^{(2)} - \frac{1}{6} \sigma^3 ~\mathtt{P}^{(1)}\mathtt{P}^{(2)}+ \frac{1}{24} \sigma^4 ~\mathtt{P}^{(2)}\mathtt{P}^{(2)}$ & 1.4718 \\[1em]
$\methodcat{2}{5}{3}$ & $1 - \sigma~\mathtt{P}^{(1)} + \frac{1}{2} \sigma^2 ~\mathtt{P}^{(2)} - \frac{1}{6} \sigma^3 ~\mathtt{P}^{(1)}\mathtt{P}^{(2)} + \frac{1}{24} \sigma^4 ~\mathtt{P}^{(2)}\mathtt{P}^{(2)}$ & 1.0619 \\[0.2em]
& $- \frac{1}{120} \sigma^5 ~\mathtt{P}^{(1)}\mathtt{P}^{(2)}\mathtt{P}^{(2)} + \frac{1}{600} \sigma^6 ~\mathtt{P}^{(2)}\mathtt{P}^{(2)}\mathtt{P}^{(2)}$ & \\[1em]
$\methodcat{3}{5}{2}$ & $1 - \sigma~\mathtt{P}^{(1)} + \frac{1}{2} \sigma^2 ~\mathtt{P}^{(2)} - \frac{1}{6} \sigma^3 ~\mathtt{P}^{(3)}+ \frac{1}{24} \sigma^4 ~\mathtt{P}^{(1)}\mathtt{P}^{(3)}$ & 0.4275 \\[0.2em]
& $- \frac{1}{120} \sigma^5 ~\mathtt{P}^{(2)}\mathtt{P}^{(3)} + \frac{1}{900} \sigma^6 ~\mathtt{P}^{(3)}\mathtt{P}^{(3)}$ & \\[1em]
$\methodcat{3}{7}{3}$ & $1 - \sigma~\mathtt{P}^{(1)} + \frac{1}{2} \sigma^2 ~\mathtt{P}^{(2)} - \frac{1}{6} \sigma^3 ~\mathtt{P}^{(3)}+ \frac{1}{24} \sigma^4 ~\mathtt{P}^{(1)}\mathtt{P}^{(3)}$ & 0.2300 \\[0.2em]
& $- \frac{1}{120} \sigma^5 ~\mathtt{P}^{(2)}\mathtt{P}^{(3)} + \frac{1}{720} \sigma^6 ~\mathtt{P}^{(3)}\mathtt{P}^{(3)} - \frac{1}{5040} \sigma^7 ~\mathtt{P}^{(1)}\mathtt{P}^{(2)}\mathtt{P}^{(3)} $ &  \\[0.2em]
& $+ \left(\frac{1}{23520} - \frac{\sqrt{2}}{70560} \right) \sigma^8 ~\mathtt{P}^{(2)}\mathtt{P}^{(3)}\mathtt{P}^{(3)} - \left(\frac{11}{1481760} - \frac{\sqrt{2}}{246960} \right) \sigma^9 ~\mathtt{P}^{(3)}\mathtt{P}^{(3)}\mathtt{P}^{(3)}$ & \\[1em]
$\methodcat{4}{6}{2}$ & $1 - \sigma~\mathtt{P}^{(1)} + \frac{1}{2} \sigma^2 ~\mathtt{P}^{(2)} - \frac{1}{6} \sigma^3 ~\mathtt{P}^{(3)}+ \frac{1}{24} \sigma^4 ~\mathtt{P}^{(4)}$ & 0.8563 \\[0.2em]
& $- \frac{1}{120} \sigma^5 ~\mathtt{P}^{(1)}\mathtt{P}^{(4)} + \frac{1}{720} \sigma^6 ~\mathtt{P}^{(2)}\mathtt{P}^{(4)} - \frac{1}{6480} \sigma^7 ~\mathtt{P}^{(3)}\mathtt{P}^{(4)} + \frac{1}{77760} \sigma^8 ~\mathtt{P}^{(4)}\mathtt{P}^{(4)}$ &
\end{tabular}
}
\end{table}

The critical CFL values $\sigma^*$ up to four decimals are shown in Tbl.~\ref{tab:cfl-ampFactor-MDRKschemes}. 
Notice that the two-derivative schemes improve the linear stability, whereas the other schemes reduce the stability compared to the original CAT method. 
To put this observation into perspective, let us point out that the CAT algorithm uses $2p$ derivatives, and thus is based on high-order Lax-Wendroff methods with $\mathtt{r}$ even. If, only for the sake of discussion, we take \emph{uneven} $\mathtt{r}$, we get another picture. For $\mathtt{r} = 1$, we obtain the forward-time central-space scheme, which is infamous for being unconditionally unstable \cite{LEV} and for odd $\mathtt{r} > 1$, we obtain CFL numbers smaller than one.
Hence, we can make some important observations:
\begin{itemize}
\item Most importantly we can conclude that a method of lines (MOL) viewpoint is inadequate. Solely regarding the stability regions $\mathcal{R}$ in Fig. \ref{fig:stability-regions-MDRK} would give the idea that the $\methodcat{3}{7}{3}$ scheme provides the best stability. This is clearly not the case since the even-derivative schemes are shown to be better in terms of stability. 
\item Choosing even order derivatives gives better results than choosing an odd number of derivatives, i.e. the two- and four-derivative schemes show better stability than the three-derivative schemes. This is in very good agreement with the observations on the original CAT method.
\item In contrast to Taylor-methods, where only the number of derivatives can be prescribed, MDRK methods have two free parameters, namely the number of derivatives \emph{and} the number of stages. We observe that stages and derivatives highly influence the stability properties of the MDRKCAT method. This allows the identification of well-suited MDRK schemes and gives more flexibility compared to original Taylor-methods. 
\end{itemize}

\section{Numerical results}\label{sec:numres}
In this section, we show numerical results validating our analytical findings. By means of several continuous test cases ranging from scalar PDEs to the system of Euler equations, we show that the expected orders of convergence are obtained. 
For brevity, we do not include flux limiting techniques to this work; hence, we avoid setups where shock formation occurs. Note that flux limiting can be incorporated in a straightforward way as in \cite{CarrilloPares2019}.

The measure for the accuracy in this section is the scaled $l_1$-error at time $t^N \equiv T_{\text{end}}$
\[
\| \mathbf{w}(T_{\text{end}}) - \mathbf{w}^N \|_1 := \Delta x \sum\limits_{i=1}^{M} |w(x_i, T_{\text{end}}) - w_i^{N} | \, ,
\]
$\mathbf{w}: t \mapsto \mathbf{w}(t)$ being a function of time returning a vector of exact solution values (or a reference solution) in the nodes $x_1,\dots,x_M$, and $\mathbf{w}^N$ being the vector of approximations $w^N_i$ at $t^N$. For systems, the sum of the $l_1$-errors corresponding to the separate solution variables is considered. All displayed convergence plots begin at $M = 8$ cells and double the amount of cells with each iteration. In order to enforce the adopted CFL value $\sigma$, the local eigenvalues $\lambda^n_{\text{eig},i}$ of the Jacobian w.r.t. $w_i^n$ are computed. By means of the relation
\[
\Delta t^{n} := \frac{\sigma \Delta x}{\max_{i}|\lambda^n_{\text{eig},i}|} \, 
\]
the timestep is then computed. As the maximum eigenvalue in the computational domain varies over time, a non-constant timestep is prescribed. This highlights the ability of the novel method to use varying timestep sizes, see Rem.~\ref{Rm:timestep}.

\subsection{Burgers equation}
First, we consider Burgers equation 
\begin{equation*}\label{eq:burgers}
\partial_t w + \partial_x \left(\frac{w^2}{2}\right) = 0,\end{equation*}
with the cosine-wave initial condition with periodic boundary conditions
\begin{equation}\label{eq:cosine}
w(x,0) = \frac{1}{4}\cos(\pi x) \qquad \text{on} \quad x \in [0, 2] \,,
\end{equation}
to certify the accuracy $\min(2p,q)$ obtained in Thm.~\ref{thm:mdrkcat-accuracy}. Since the cosine-wave Eq.~\eqref{eq:cosine} has both positive and negative values, the characteristic lines must cross and a shock is formed at some point. The breaking time of the wave \cite{LEV,2011_Whitham} is at $t^* = \frac{4}{\pi}$. Hence we set the final time to $T_{\text{end}} = 0.8$, well before shock formation.

\begin{figure}[h!]
\centering
\begin{subfigure}{.5\textwidth}
  \centering
  \resizebox{\textwidth}{!}{%
  	\ifthenelse{\boolean{compilefromscratch}}{
        \tikzsetnextfilename{lwcat_tfinal0.8_cfl0.5_burgers_cosine}  
		\begin{tikzpicture}
		\begin{loglogaxis}[
			title={CAT-$p$ method},
			xlabel={$\Delta x$},
			ylabel={$l_1$-error},
			grid=major,
			legend entries={$p=1$,$p=2$,$p=3$,$p=4$,$p=5$,$p=6$},
			legend pos=south east,
			cycle list name = rainbow
			]
			\addplot table {data/lwcat_p1_tfinal0.8_cfl0.5_burgers_cosine.dat};
			\addplot table {data/lwcat_p2_tfinal0.8_cfl0.5_burgers_cosine.dat};
			\addplot table {data/lwcat_p3_tfinal0.8_cfl0.5_burgers_cosine.dat};
			\addplot table [comment chars=!] {data/lwcat_p4_tfinal0.8_cfl0.5_burgers_cosine.dat};
			\addplot table {data/lwcat_p5_tfinal0.8_cfl0.5_burgers_cosine.dat};
			\addplot table {data/lwcat_p6_tfinal0.8_cfl0.5_burgers_cosine.dat};
		
			\logLogSlopeTriangle{0.085}{0.07}{0.615}{2}{-1}{red};
			\logLogSlopeTriangle{0.085}{0.07}{0.315}{4}{-1}{orange};
			\logLogSlopeTriangle{0.252}{0.065}{0.275}{6}{-1}{grassGreen};
			\logLogSlopeTriangle{0.333}{0.058}{0.25}{8}{-1}{mintGreen};
			\logLogSlopeTriangle{0.572}{0.055}{0.355}{10}{1}{blue};
			\logLogSlopeTriangle{0.5}{0.06}{0.11}{12}{1}{lightViolet};
		\end{loglogaxis}
		\end{tikzpicture}	
	  }
	  {
	  \includegraphics{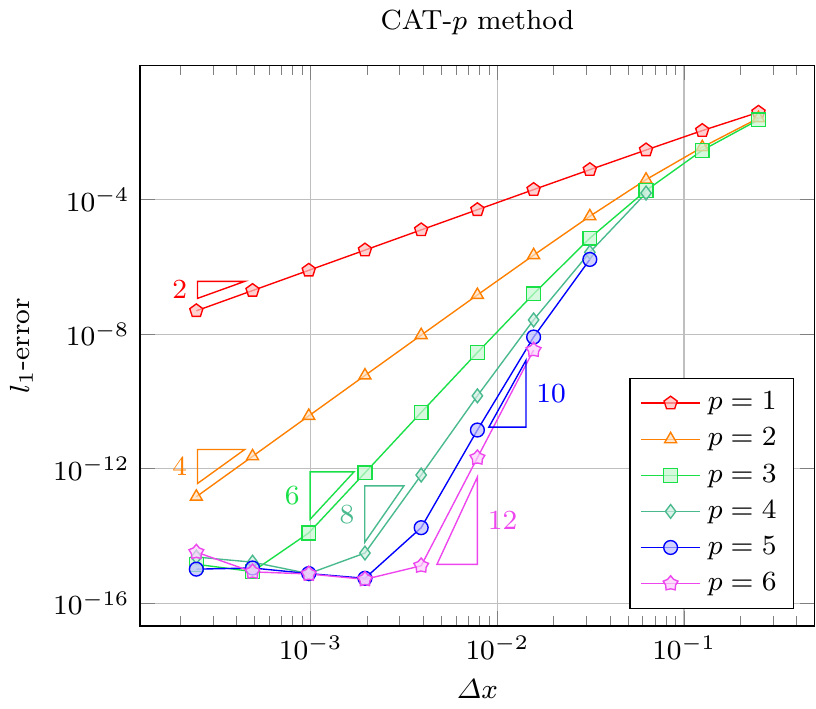}
	  }
	}
\end{subfigure}%
\begin{subfigure}{0.5\textwidth}
  \centering
  \resizebox{\textwidth}{!}{%
  \ifthenelse{\boolean{compilefromscratch}}{
        \tikzsetnextfilename{mderkcat_tfinal0.8_cfl0.5_burgers_cosine}  
		\begin{tikzpicture}
		\begin{loglogaxis}[
			title={MDRKCAT methods, Alg. \ref{alg:MDRKCAT}},
			xlabel={$\Delta x$},
			ylabel={\phantom{$l_1$-error}},
			grid=major,
			legend entries={$\methodcat{2}{3}{2}$,$\methodcat{2}{4}{2}$,$\methodcat{2}{5}{3}$,$\methodcat{3}{5}{2}$,$\methodcat{3}{7}{3}$,$\methodcat{4}{6}{2}$},
			legend pos=south east,
			cycle list name = rainbow
			]
			\addplot table {data/mderkcat_TDRK3-2s_p2_tfinal0.8_cfl0.5_burgers_cosine.dat};
			\addplot table {data/mderkcat_TDRK4-2s_p2_tfinal0.8_cfl0.5_burgers_cosine.dat};
			\addplot table {data/mderkcat_TDRK5-3s_p3_tfinal0.8_cfl0.5_burgers_cosine.dat};
			\addplot table {data/mderkcat_ThDRK5-2s_p3_tfinal0.8_cfl0.5_burgers_cosine.dat};
			\addplot table {data/mderkcat_ThDRK7-3s_p4_tfinal0.8_cfl0.5_burgers_cosine.dat};
			\addplot table {data/mderkcat_FDRK6-2s_p3_tfinal0.8_cfl0.5_burgers_cosine.dat};
	
			\logLogSlopeTriangle{0.085}{0.07}{0.375}{3}{-1}{red};
			\logLogSlopeTriangle{0.17}{0.07}{0.22}{4}{1}{orange};
			\logLogSlopeTriangle{0.33}{0.065}{0.38}{6}{-1}{grassGreen!50!mintGreen};		
			\logLogSlopeTriangle{0.415}{0.06}{0.12}{8}{1}{blue};
		\end{loglogaxis}
		\end{tikzpicture}
	  }
	  {
	  \includegraphics{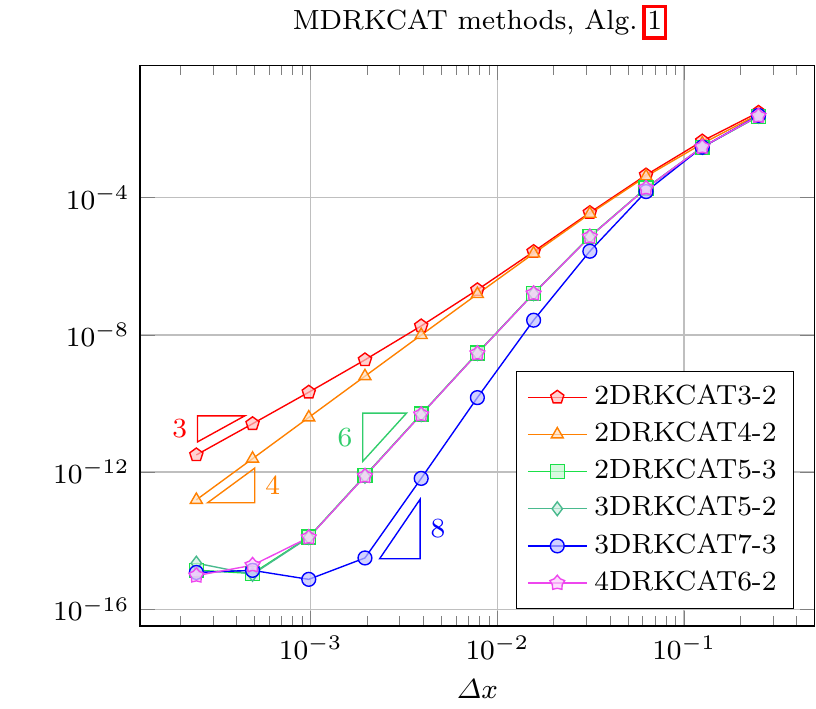}
	  }
	}
\end{subfigure}
\caption{Convergence order of the CAT and explicit MDRKCAT methods applied to Burgers equation on the cosine wave $w_0(x) = \frac{1}{4}\cos(\pi x)$ up to $T_{\text{end}} = 0.8$ with CFL $\sigma= 0.5$. The $\methodcat{2}{5}{3}$, $\methodcat{3}{5}{2}$ and $\methodcat{4}{6}{2}$ schemes behave in a very alike manner. 
For large $\Delta t$ and large $p$, we have observed stability problems for the CAT method. 
Those divergent results of the CAT method have been omitted in the convergence plot. 
}
\label{fig:convPlot_tfinal0.8_cfl0.5_burgers_cosine}
\end{figure}

Using the characteristic lines solution, $l_1$-errors have been calculated for the CAT-$p$ methods ($p = 1, \dots, 6$) and the MDRKCAT methods (Appendix~\ref{app:ButherTableaux}). The results with CFL $\sigma= 0.5$ are visualized in Fig.~\ref{fig:convPlot_tfinal0.8_cfl0.5_burgers_cosine}. All expected convergence orders are obtained; order $2p$ for the CAT-$p$ methods and at least order $q$ of the MDRK schemes. 
The schemes $\methodcat{2}{5}{3}$, $\methodcat{3}{5}{2}$ and $\methodcat{3}{7}{3}$ behave better than expected. This is caused by the fact that the spatial order of accuracy is higher than the temporal one; and spatial errors dominate the overall behavior at least for `large' $\Delta t$. We have observed similar behavior also for other schemes where $2p > q$.
Given that CAT methods have been designed as a natural generalization of Lax-Wendroff methods with an even-order accuracy, odd-order MDRK schemes take advantage here. We expect this behavior to become more apparent when computing with finer machine precision. Convergence plots such as in Fig.~\ref{fig:convPlot_tfinal0.8_cfl0.5_burgers_cosine} will then manifest as a stretched out S-curve.

Further, we notice that for higher values of $p$ and larger values $\Delta t$, the CAT methods tend to be less stable. Unstable results have been left out in Fig.~\ref{fig:convPlot_tfinal0.8_cfl0.5_burgers_cosine} for $p = 4, 5$ and $6$. Even though CAT methods have been shown to be linearly stable under a CFL-$1$ condition \cite{CarrilloPares2019}, rapid divergence is observed well before shocks are formed for larger values of $p$ at regions where the derivative of the solution $w'(x)$ is large in absolute value. The MDRKCAT methods used in this work do not suffer this fate.

Next, we perform a similar study having the initial solution,
\begin{equation*}\label{eq:expcossin}
w(x,0) = \frac 1 4e^{\cos(\pi x) + \sin(\pi x)} \qquad \text{on} \quad x \in [0, 2] \,,
\end{equation*}
with periodic boundary conditions.
Having a steeper peak than the cosine \eqref{eq:cosine}, the breaking time will be earlier. We find $t^* = \frac{4}{\pi e}$, and choose $T_{\text{end}} = 0.3$ accordingly.

\begin{figure}[h!]
\centering
\begin{subfigure}{.47\textwidth}
  \centering
  \resizebox{\textwidth}{!}{%
  	\ifthenelse{\boolean{compilefromscratch}}{
        \tikzsetnextfilename{sol_ThDRK7-3s_N8192_p4_tfinal0.3_cfl0.5_burgers_expcossin}  
		\begin{tikzpicture}
		\begin{axis}[
			title={$\methodcat{3}{7}{3}$, $\Delta x = 2.44 \cdot 10^{-4}$, $T_{\text{end}} = 0.3$},
			xlabel={$x$},
			ylabel={$w$},
			grid=major,
			legend entries={$t=0$,$t = 0.3$},
			legend cell align=left,
			legend pos=south west
			]
			
			\addplot [
				black!30!pink,
				domain=0:2,
				samples=100
			] {0.25*exp(cos(deg(pi*x))+sin(deg(pi*x)))};		
			
			\addplot[
				blue,
				mark size=1pt,
				mark options={fill=blue!25!white, fill opacity=0.7},
				mark phase =32,
				mark repeat=96,
				mark=*
			] table[
					x expr = {\thisrowno{0}},
					y expr = {\thisrowno{1}}
				] {data/sol_ThDRK7-3s_N8192_p4_tfinal0.3_cfl0.5_burgers_expcossin.dat};
	
		\end{axis}
		\end{tikzpicture}
	  }
	  {
      \includegraphics{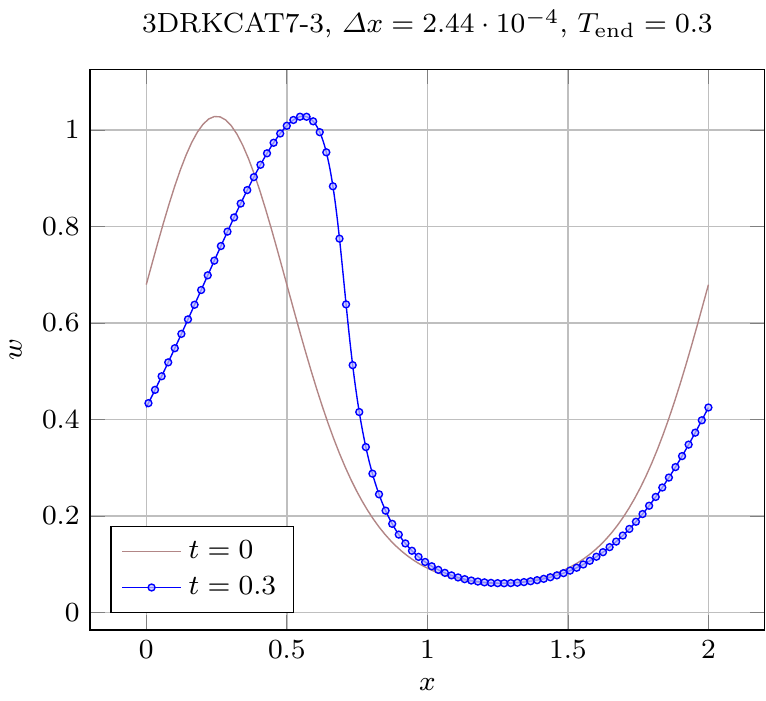}
	  }
	}
\end{subfigure}%
\begin{subfigure}{0.505\textwidth}
  \centering
  \resizebox{\textwidth}{!}{%
 	 \ifthenelse{\boolean{compilefromscratch}}{
        \tikzsetnextfilename{mderkcat_tfinal0.3_cfl0.5_burgers_expcossin}  
		\begin{tikzpicture}
		\begin{loglogaxis}[
			title={MDRKCAT methods, Burgers},
			xlabel={$\Delta x$},
			ylabel={$l_1$-error},
			grid=major,
			legend entries={$\methodcat{2}{3}{2}$,$\methodcat{2}{4}{2}$,$\methodcat{2}{5}{3}$,$\methodcat{3}{5}{2}$,$\methodcat{3}{7}{3}$,$\methodcat{4}{6}{2}$},
			legend pos=south east,
			cycle list name = rainbow
			]
			\addplot table {data/mderkcat_TDRK3-2s_p2_tfinal0.3_cfl0.5_burgers_expcossin.dat};
			\addplot table {data/mderkcat_TDRK4-2s_p2_tfinal0.3_cfl0.5_burgers_expcossin.dat};
			\addplot table {data/mderkcat_TDRK5-3s_p3_tfinal0.3_cfl0.5_burgers_expcossin.dat};
			\addplot table {data/mderkcat_ThDRK5-2s_p3_tfinal0.3_cfl0.5_burgers_expcossin.dat};
			\addplot table {data/mderkcat_ThDRK7-3s_p4_tfinal0.3_cfl0.5_burgers_expcossin.dat};
			\addplot table {data/mderkcat_FDRK6-2s_p3_tfinal0.3_cfl0.5_burgers_expcossin.dat};
	
			\logLogSlopeTriangle{0.085}{0.07}{0.405}{3}{-1}{red};
			\logLogSlopeTriangle{0.17}{0.07}{0.275}{4}{1}{orange};
			\logLogSlopeTriangle{0.33}{0.065}{0.465}{6}{-1}{grassGreen!50!mintGreen};		
			\logLogSlopeTriangle{0.33}{0.06}{0.105}{7}{1}{blue};
		\end{loglogaxis}
		\end{tikzpicture}
	  }
	  {
      \includegraphics{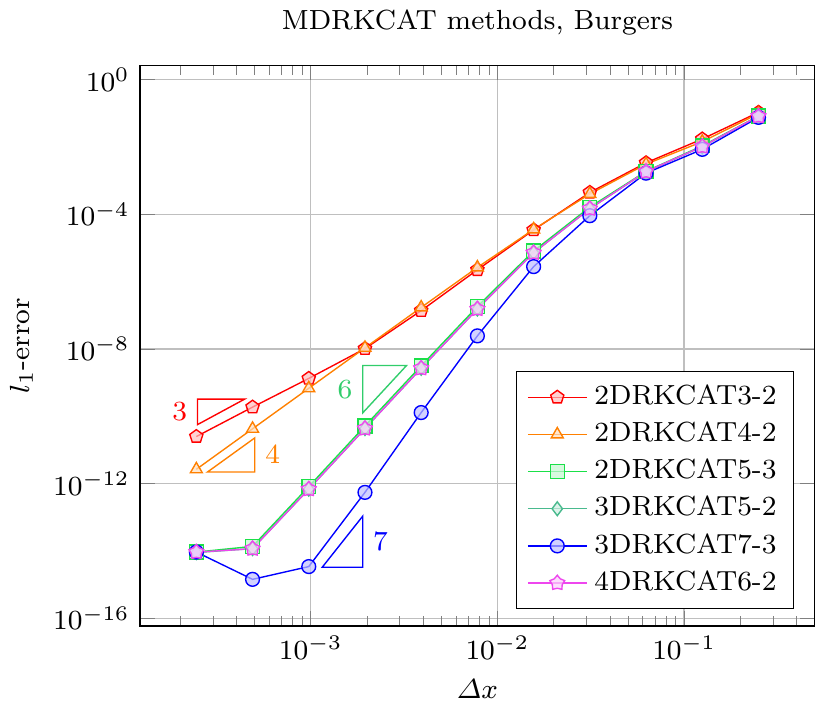}
	  }
	}
\end{subfigure}
\caption{Burgers equation applied to $w_0(x) = \frac 1 4 e^{\cos(\pi x) + \sin(\pi x)}$ up to $T_{\text{end}} = 0.3$ with CFL $\sigma= 0.5$. Left: $\methodcat{3}{7}{3}$ solution using $M = 8192$ cells, less nodal points are shown for better visual distinctness. Right: convergence order of the explicit MDRKCAT methods.}
\label{fig:tfinal0.3_cfl0.5_burgers_expcossin}
\end{figure}

In Fig.~\ref{fig:tfinal0.3_cfl0.5_burgers_expcossin} the final solution at $T_{\text{end}} = 0.3$ is visualized and accuracy is studied in the convergence plots, solely focused on the MDRKCAT method. The behavior is very similar to the previous case, except that the $\methodcat{3}{7}{3}$ scheme is driven back faster to order $7$.

\subsection{Buckley-Leverett equation}
Next, we consider the Buckley-Leverett flux \cite{LEV},
\begin{equation*}\label{eq:BuckleyLeverettFlux}
f(w) = \frac{4w^2}{4w^2 + (1-w)^2} \, .
\end{equation*}
This flux is non-convex and introduces more nonlinearities compared to the Burgers flux. We consider the initial condition
\begin{equation*}\label{eq:downpulse}
w(x,0) = 1 - \frac 3 4\cos^2\left(\frac{\pi}{2} x \right) \qquad \text{on} \quad x \in [-1, 1] \, .
\end{equation*}
The typical Buckley-Leverett profile consist of a shock wave followed directly by a rarefaction wave. We set $T_{\text{end}} = 0.1$ to remain continuous and be able to calculate the exact solution via its characteristics. The solution and convergence plots are visualized in Fig.~\ref{fig:tfinal0.1_cfl0.5_buckley-leverett_cos-downpulse} with CFL $\sigma=0.5$. We notice that the numerical solution tends toward the expected Buckley-Leverett profile. All schemes converge with the expected accuracy in a similar way as for Burgers equation.

\begin{figure}[h!]
\centering
\begin{subfigure}{.47\textwidth}
  \centering
  \resizebox{\textwidth}{!}{%
 	 \ifthenelse{\boolean{compilefromscratch}}{
        \tikzsetnextfilename{sol_ThDRK7-3s_N8192_p4_tfinal0.1_cfl0.5_buckley-leverett_cos-downpulse}  
		\begin{tikzpicture}
		\begin{axis}[
			title={$\methodcat{3}{7}{3}$, $\Delta x =  2.44 \cdot 10^{-4}$, $T_{\text{end}} = 0.1$},
			xlabel={$x$},
			ylabel={$w$},
			grid=major,
			legend entries={$t=0$,$t = 0.1$},
			legend cell align=left,
			legend pos=south west
			]
			
			\addplot [
				black!30!pink,
				domain=-1:1,
				samples=100
			] {1 - 0.75*cos(deg(0.5*pi*x))^2};		
			
			\addplot[
				blue,
				mark size=1pt,
				mark options={fill=blue!25!white, fill opacity=0.7},
				mark phase= 32,
				mark repeat= 96,
				mark=*
			] table[
					x expr = {\thisrowno{0}},
					y expr = {\thisrowno{1}}
				] {data/sol_ThDRK7-3s_N8192_p4_tfinal0.1_cfl0.5_buckley-leverett_cos-downpulse.dat};
		\end{axis}
		\end{tikzpicture}	
	  }
	  {
      \includegraphics{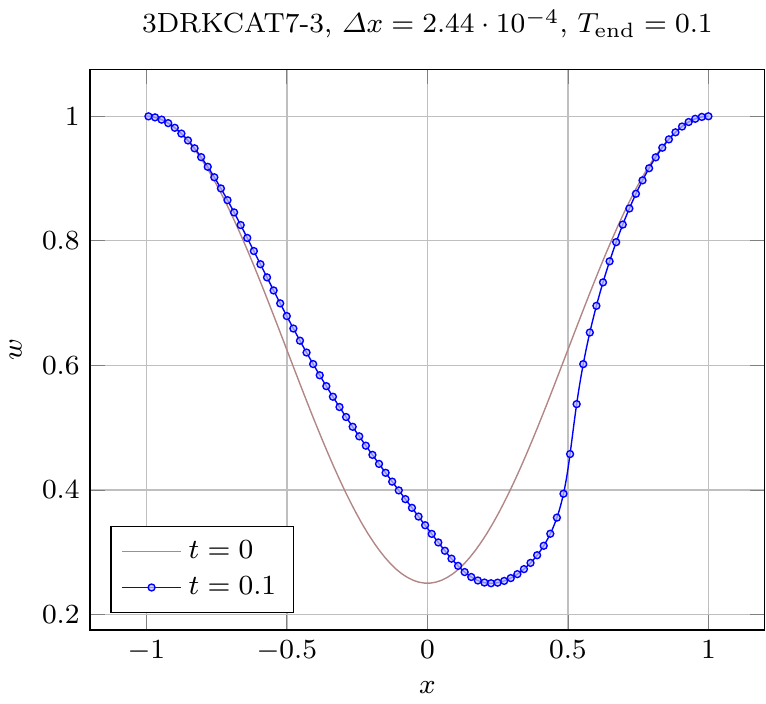}
	  }
	}
\end{subfigure}%
\begin{subfigure}{0.505\textwidth}
  \centering
  \resizebox{\textwidth}{!}{%
 	 \ifthenelse{\boolean{compilefromscratch}}{	
        \tikzsetnextfilename{mderkcat_tfinal0.1_cfl0.5_buckley-leverett_cos-downpulse}
		\begin{tikzpicture}
		\begin{loglogaxis}[
			title={MDRKCAT methods, Buckley-Leverett},
			xlabel={$\Delta x$},
			ylabel={$l_1$-error},
			grid=major,
			legend entries={$\methodcat{2}{3}{2}$,$\methodcat{2}{4}{2}$,$\methodcat{2}{5}{3}$,$\methodcat{3}{5}{2}$,$\methodcat{3}{7}{3}$,$\methodcat{4}{6}{2}$},
			legend pos=south east,
			cycle list name = rainbow
			]
			\addplot table {data/mderkcat_TDRK3-2s_p2_tfinal0.1_cfl0.5_buckley-leverett_cos-downpulse.dat};
			\addplot table {data/mderkcat_TDRK4-2s_p2_tfinal0.1_cfl0.5_buckley-leverett_cos-downpulse.dat};
			\addplot table {data/mderkcat_TDRK5-3s_p3_tfinal0.1_cfl0.5_buckley-leverett_cos-downpulse.dat};
			\addplot table {data/mderkcat_ThDRK5-2s_p3_tfinal0.1_cfl0.5_buckley-leverett_cos-downpulse.dat};
			\addplot table {data/mderkcat_ThDRK7-3s_p4_tfinal0.1_cfl0.5_buckley-leverett_cos-downpulse.dat};
			\addplot table {data/mderkcat_FDRK6-2s_p3_tfinal0.1_cfl0.5_buckley-leverett_cos-downpulse.dat};
	
			\logLogSlopeTriangle{0.085}{0.07}{0.39}{3}{-1}{red};
			\logLogSlopeTriangle{0.17}{0.07}{0.282}{4}{1}{orange};
			\logLogSlopeTriangle{0.247}{0.065}{0.405}{6}{-1}{grassGreen!50!mintGreen};		
			\logLogSlopeTriangle{0.335}{0.06}{0.17}{8}{1}{blue};
		\end{loglogaxis}
		\end{tikzpicture}
	  }
	  {
      \includegraphics{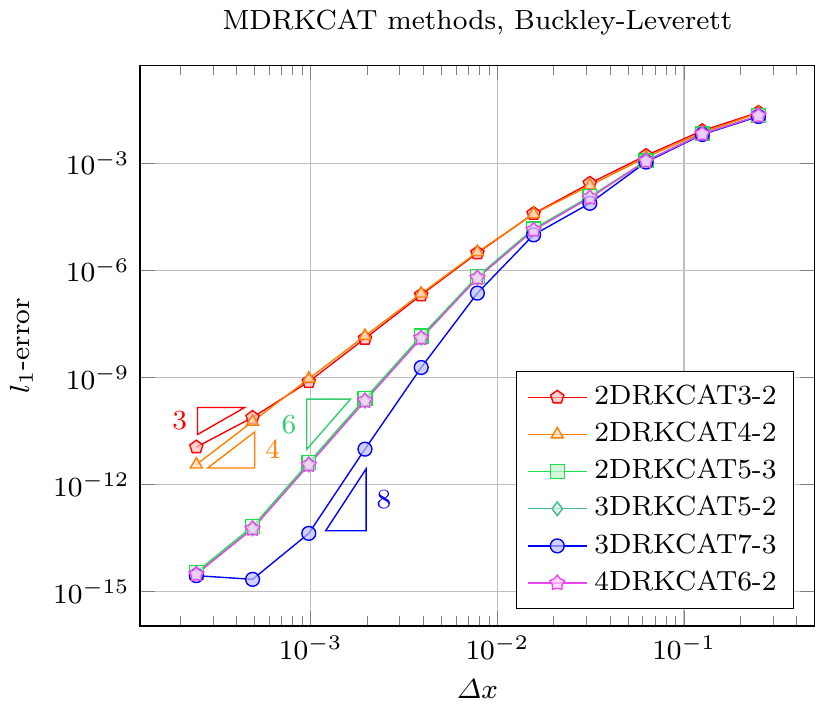}
	  }
	}
\end{subfigure}
\caption{Buckley-Leverett equation applied to $w_0(x) = 1 - \frac 3 4 \cos^2\left(\frac{\pi}{2} x \right)$ up to $T_{\text{end}} = 0.1$ with CFL $\sigma= 0.5$. Left: $\methodcat{3}{7}{3}$ solution using $M = 8192$ cells, less nodal points are shown for better visual distinctness. Right: convergence order of the explicit MDRKCAT methods.}
\label{fig:tfinal0.1_cfl0.5_buckley-leverett_cos-downpulse}
\end{figure}

\subsection{One-dimensional Euler equations}
Finally, we consider the Euler equations of gas dynamics
\begin{equation*}\label{eq:systemConsLaws}
\partial_t {w} + \partial_x {f}({w}) = 0 \, ,
\end{equation*} 
in which
\begin{equation*}\label{eq:1DEulerFlux}
{w} = \begin{pmatrix}
\rho \\
\rho u \\
E
\end{pmatrix}\, , \quad
{f}({w}) = \begin{pmatrix}
\rho u \\
\rho u^2 + p \\
u(E + p)
\end{pmatrix} \, ,
\end{equation*}
with $\rho$ the density, $u$ the velocity, $E$ the energy of the system and $p$ the pressure. The system is closed via the equation of state for an ideal gas
\begin{equation*}
p = (\gamma -1)\left(E - \frac{1}{2}\rho u^2 \right) \, ,
\end{equation*}
with $\gamma$ being the ratio of specific heats, assumed to be $1.4$ \cite{LEV,2011_Whitham}.

First we initialize the primitive variables $(\rho, u, p)$ such that the Euler equations describe the linear advection of a density profile. 
To this end, we take
\[
\rho(x,0) = 1 + 0.3\sin(\pi x) \qquad \text{on} \quad x \in [0, 4]\, ,
\]
and set both $u_0(x,0)$ and $p_0(x,0)$ to be one. 
Periodic boundary conditions are used and $T_{\text{end}}$ is set to $0.8$. 
In Fig.~\ref{fig:convPlot_tfinal0.8_cfl0.5_euler_sinewave} the corresponding convergence plots are displayed with CFL $\sigma = 0.5$. Immediately starting from the coarsest meshes the expected convergence orders are obtained.

\begin{figure}[h!]
\begin{center}
\ifthenelse{\boolean{compilefromscratch}}{	
        \tikzsetnextfilename{mderkcat_tfinal0.8_cfl0.5_euler-sinewave}
		\begin{tikzpicture}
		\begin{loglogaxis}[
			title={MDRKCAT methods, 1D Euler - advected density},
			xlabel={$\Delta x$},
			ylabel={$l_1$-error},
			grid=major,
			legend style={nodes={scale=0.7, transform shape}},
			legend entries={$\methodcat{2}{3}{2}$,$\methodcat{2}{4}{2}$,$\methodcat{2}{5}{3}$,$\methodcat{3}{5}{2}$,$\methodcat{3}{7}{3}$,$\methodcat{4}{6}{2}$, DG-$\mathbb{P}_3$, DG-$\mathbb{P}_4$},
			legend pos=south east,
			cycle list name = rainbow
			]
			\addplot table {data/mderkcat_TDRK3-2s_p2_tfinal0.8_cfl0.5_euler_euler-sinewave.dat};
			\addplot table {data/mderkcat_TDRK4-2s_p2_tfinal0.8_cfl0.5_euler_euler-sinewave.dat};
			\addplot table {data/mderkcat_TDRK5-3s_p3_tfinal0.8_cfl0.5_euler_euler-sinewave.dat};
			\addplot table {data/mderkcat_ThDRK5-2s_p3_tfinal0.8_cfl0.5_euler_euler-sinewave.dat};
			\addplot table {data/mderkcat_ThDRK7-3s_p4_tfinal0.8_cfl0.5_euler_euler-sinewave.dat};
			\addplot[lightViolet, mark=mystar*] table {data/mderkcat_FDRK6-2s_p3_tfinal0.8_cfl0.5_euler_euler-sinewave.dat}; 		
					
		 	\logLogSlopeTriangle{0.09}{0.07}{0.36}{3}{-1}{red};
			\logLogSlopeTriangle{0.17}{0.07}{0.245}{4}{1}{orange};
			\logLogSlopeTriangle{0.37}{0.067}{0.35}{6}{-1}{grassGreen!50!mintGreen};		
			\logLogSlopeTriangle{0.54}{0.065}{0.15}{8}{1}{blue};
		\end{loglogaxis}
		\end{tikzpicture}
	}
	{
    \includegraphics{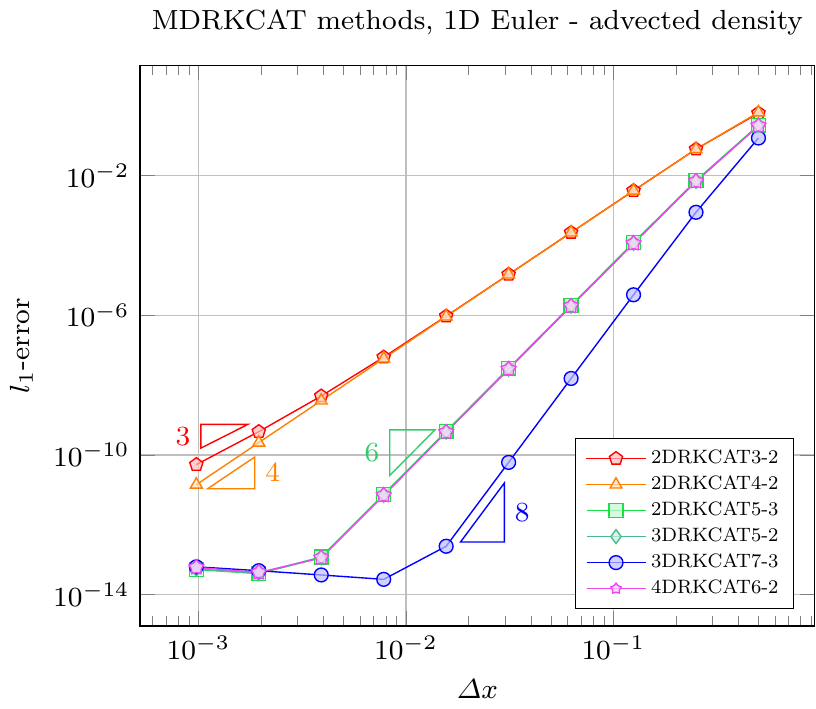}
	}
\caption{Convergence order of the explicit MDRKCAT methods applied to $\rho_0(x) = 1 + 0.3\sin(\pi x)$, $u_0(x) = p_0(x) = 1$ on $x \in [0, 4]$ up to $T_{\text{end}} = 0.8$ with $\sigma= 0.5$. }
\label{fig:convPlot_tfinal0.8_cfl0.5_euler_sinewave}
\end{center}
\end{figure}

Secondly, we consider the initial condition
\begin{equation*}
\label{eq:sine-system}
{w}(x,0) = \frac 1 4 \begin{pmatrix}
3  \\
1  \\
3 
\end{pmatrix}
+ \frac {\sin(\pi x)}{2}
\begin{pmatrix}
1 \\ 1 \\ 1
\end{pmatrix}
\qquad \text{on} \quad x \in [0, 2] \, ,
\end{equation*}
with periodic boundary conditions and $T_{\text{end}} = 0.2$ for ${w}$ to remain continuous. In Fig.~\ref{fig:sol_ThDRK7-3s_N4096_p4_tfinal0.2_cfl0.5_euler_sine-system} the solution is visualized for the $\methodcat{3}{7}{3}$ scheme with $p=4$, CFL $\sigma=0.5$ on $M = 4096$ cells.

In order to inspect accuracy, a reference solution has been computed via a discontinuous Galerkin (DG) method. We have used third-order polynomials in space, and a third-order strong-stability-preserving Runge-Kutta method in time \cite{Gottlieb2001}. The reference computation was executed on $10240$ cells with a CFL number of $\sigma=0.15$. 

Convergence plots have been generated in Fig.~\ref{fig:convPlot_tfinal0.2_cfl0.15_euler_sine-system} using CFL $\sigma=0.15$. All expected orders were obtained. For smaller $\Delta x$, the $l_1$-error converges towards approximately $2\cdot 10^{-11}$, which is the accuracy of the reference DG solution.

Akin to the earlier cases, a CFL value of $\sigma=0.5$ was attempted for the construction of the convergence plots. However not all simulations were stable, more specifically the $\methodcat{3}{7}{3}$ scheme using $p=4$ diverged for $M = 8, 16, 32$ and $64$ cells. 

\begin{figure}[h!]
  \centering
  \ifthenelse{\boolean{compilefromscratch}}{	
    \tikzsetnextfilename{sol_ThDRK7-3s_N4096_p4_tfinal0.2_cfl0.5_euler_sine-system_rho}
	\begin{tikzpicture}[baseline]
	\begin{axis}[
		title={$\methodcat{3}{7}{3}$, $\Delta x = 4.88 \cdot 10^{-4}$, $T_{\text{end}} = 0.2$},
		width=\linewidth,
		height=3cm,
		ylabel={$\rho$},
		ylabel shift = 0.8em,
		ytick distance=0.25,
		grid=major,
		legend entries={$t=0$,$t = 0.2$},
		legend cell align=left,
		legend columns = 2,
		legend style={nodes={scale=0.75, transform shape}}
		]
		
		\addplot [
			black!30!pink,
			domain=0:2,
			samples=100
		] {0.75 + 0.5*sin(deg(pi*x))};		
		
		\addplot[
			blue,
			mark size=1pt,
			mark options={fill=blue!25!white, fill opacity=0.7},
			mark phase = 32,
			mark repeat=96,
			mark=*
		] table[
				x expr = {\thisrowno{0}},
				y expr = {\thisrowno{1}}
			] {data/sol_ThDRK7-3s_N4096_p4_tfinal0.2_cfl0.5__euler_sine-system.dat};

	\end{axis}
	\end{tikzpicture}

    \tikzsetnextfilename{sol_ThDRK7-3s_N4096_p4_tfinal0.2_cfl0.5_euler_sine-system_rhou}
	\begin{tikzpicture}[baseline]
	\begin{axis}[
		width=\linewidth,
		height=3cm,
		ylabel={$\rho u$},
		ytick distance=0.25,
		grid=major
		]
		
		\addplot [
			black!30!pink,
			domain=0:2,
			samples=100
		] {0.25 + 0.5*sin(deg(pi*x))};		
		
		\addplot[
			blue,
			mark size=1pt,
			mark options={fill=blue!25!white, fill opacity=0.7},
			mark phase = 32,
			mark repeat=96,
			mark=*
		] table[
				x expr = {\thisrowno{0}},
				y expr = {\thisrowno{2}}
			] {data/sol_ThDRK7-3s_N4096_p4_tfinal0.2_cfl0.5__euler_sine-system.dat};

	\end{axis}
	\end{tikzpicture}
    
    \tikzsetnextfilename{sol_ThDRK7-3s_N4096_p4_tfinal0.2_cfl0.5_euler_sine-system_E}
	\begin{tikzpicture}[baseline]
	\begin{axis}[
		width=\linewidth,
		height=3cm,
		xlabel={$x$},
		ylabel={$E$},
		ylabel shift = 0.8em,
		ytick distance=0.25,
		grid=major
		]

		\addplot [
			black!30!pink,
			domain=0:2,
			samples=100
		] {0.75 + 0.5*sin(deg(pi*x))};		
				
		\addplot[
			blue,
			mark size=1pt,
			mark options={fill=blue!25!white, fill opacity=0.7},
			mark phase = 32,
			mark repeat=96,
			mark=*
		] table[
				x expr = {\thisrowno{0}},
				y expr = {\thisrowno{3}}
			] {data/sol_ThDRK7-3s_N4096_p4_tfinal0.2_cfl0.5__euler_sine-system.dat};

	\end{axis}
	\end{tikzpicture}
	}
	{
    \includegraphics{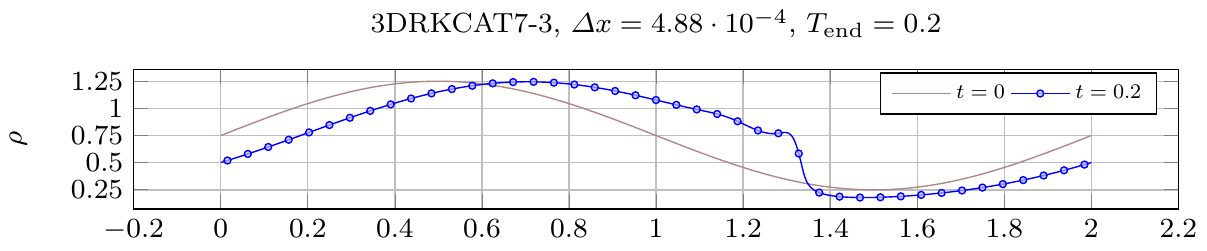}
    \includegraphics{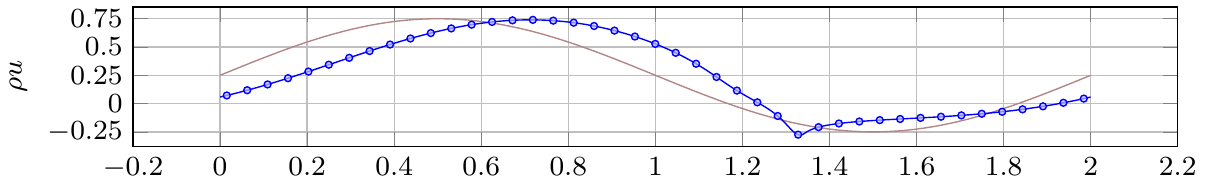}
    \includegraphics{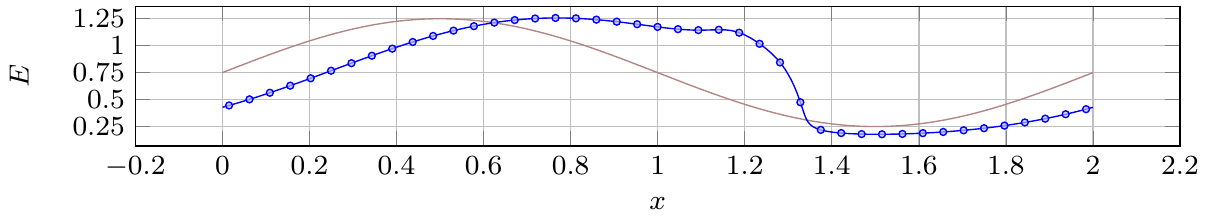}
	}
\caption{MDRKCAT 1D Euler solution of $\rho_0(x) = 0.75 + 0.5\sin(\pi x)$, $(\rho u)_0(x) = 0.25 + 0.5\sin(\pi x)$ and $E_0(x) = 0.75 + 0.5\sin(\pi x)$ up to $T_{\text{end}} = 0.2$ with $\sigma= 0.5$. The $\methodcat{3}{7}{3}$ scheme has been used on $M=4096$ cells; less nodal points are shown for better visual distinctness.}
\label{fig:sol_ThDRK7-3s_N4096_p4_tfinal0.2_cfl0.5_euler_sine-system}
\end{figure}

\begin{figure}[h!]
\begin{center}
\ifthenelse{\boolean{compilefromscratch}}{	
        \tikzsetnextfilename{mderkcat_tfinal0.2_cfl0.15_euler_sine-system}
		\begin{tikzpicture}
		\begin{loglogaxis}[
			title={MDRKCAT methods, 1D Euler},
			xlabel={$\Delta x$},
			ylabel={$l_1$-error},
			grid=major,
			legend style={nodes={scale=0.9, transform shape}},
			legend entries={$\methodcat{2}{3}{2}$,$\methodcat{2}{4}{2}$,$\methodcat{2}{5}{3}$,$\methodcat{3}{5}{2}$,$\methodcat{3}{7}{3}$,$\methodcat{4}{6}{2}$, DG-$\mathbb{P}_3$, DG-$\mathbb{P}_4$},
			legend pos=south east,
			cycle list name = rainbow
			]
			\addplot table {data/mderkcat_TDRK3-2s_p2_tfinal0.2_cfl0.15_euler_sine-system.dat};
	
			\addplot table [comment chars=!] {data/mderkcat_TDRK4-2s_p2_tfinal0.2_cfl0.15_euler_sine-system.dat};
			\addplot table {data/mderkcat_TDRK5-3s_p3_tfinal0.2_cfl0.15_euler_sine-system.dat};
			\addplot table {data/mderkcat_ThDRK5-2s_p3_tfinal0.2_cfl0.15_euler_sine-system.dat};
			\addplot table [comment chars=!] {data/mderkcat_ThDRK7-3s_p4_tfinal0.2_cfl0.15_euler_sine-system.dat};
			\addplot[lightViolet, mark=mystar*] table {data/mderkcat_FDRK6-2s_p3_tfinal0.2_cfl0.15_euler_sine-system.dat}; 		
					
			\addplot[
				mark=halfsquare left*,
				densely dashed
			] table[
					x expr = {(30/2)*\thisrowno{0}},
					y expr = {\thisrowno{1}}
				]{data/dg_convergence.dat};
				
			\addplot[
				magenta,
				mark=halfcircle*,
				mark options={fill=magenta!25!white, fill opacity=0.7,rotate=45},
				densely dashed
			] table[
					x expr = {(30/2)*\thisrowno{0}},
					y expr = {\thisrowno{2}}
				]{data/dg_convergence.dat};

			\logLogSlopeTriangle{0.085}{0.07}{0.36}{4}{-1}{red!50!orange};
			\logLogSlopeTriangle{0.18}{0.05}{0.34}{6}{-1}{grassGreen!50!mintGreen};		
			\logLogSlopeTriangle{0.265}{0.06}{0.165}{7}{1}{blue};
			\logLogSlopeTriangle{0.45}{0.065}{0.295}{4}{1}{magenta};
		\end{loglogaxis}
		\end{tikzpicture}
	}
	{
    \includegraphics{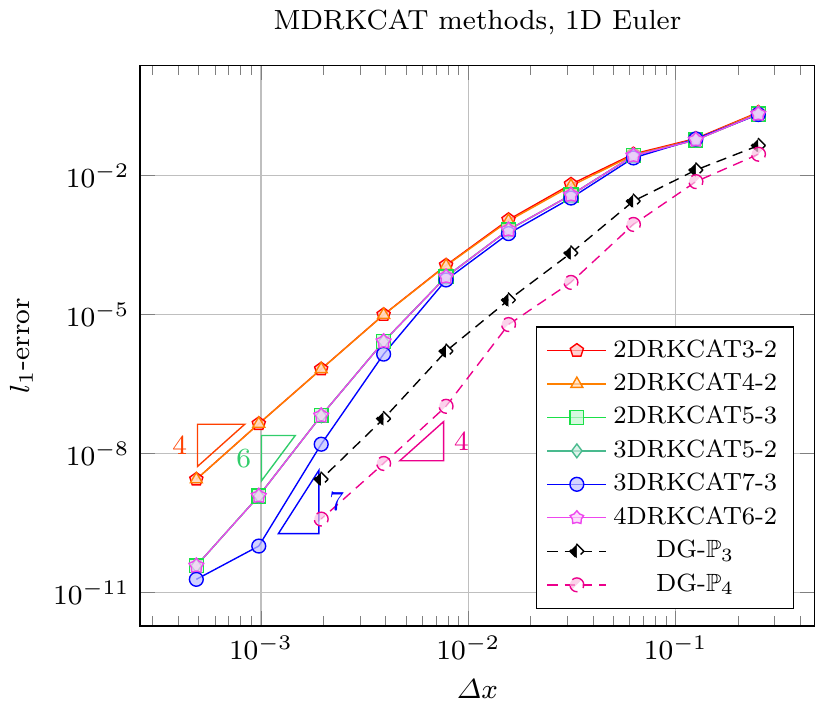}
	}
\caption{Convergence order of the explicit MDRKCAT methods applied to $\rho_0(x) = 0.75 + 0.5\sin(\pi x)$, $(\rho u)_0(x) = 0.25 + 0.5\sin(\pi x)$ and $E_0(x) = 0.75 + 0.5\sin(\pi x)$ up to $T_{\text{end}} = 0.2$ with $\sigma= 0.15$. Very similar behavior can be seen between the $\methodcat{2}{3}{2}$ and the $\methodcat{2}{4}{2}$ schemes. The same can be said for the 5th order MDRKCAT schemes and $\methodcat{4}{6}{2}$. For comparison, $l_1$-errors of a DG code with basis functions in $\mathbb{P}_3$ and $\mathbb{P}_4$ and CFL $\sigma =0.1$ are visualized.}
\label{fig:convPlot_tfinal0.2_cfl0.15_euler_sine-system}
\end{center}
\end{figure}
	
In order to better grasp the efficiency of the MDRKCAT methods, in the same Fig.~\ref{fig:convPlot_tfinal0.2_cfl0.15_euler_sine-system} convergence plots have been generated by means of a DG code that uses polynomial basis functions in $\mathbb{P}_3$ and $\mathbb{P}_4$ for each cell respectively. A fourth order SSP-RK scheme \cite[order 4, p.21]{2002_Spiteri_Ruuth} has been used as time integrator. The same amount of cells $M = 8, \dots, 1024$ has been used as for the MDRKCAT runs, the CFL $\sigma = 0.1$ with a maximum eigenvalue of $1.5$ so that $\Delta t = \Delta x \frac{\sigma}{1.5}$. The $l_1$-errors, computed at cell-midpoints, have been generated relative to the earlier mentioned order three SSP-DG reference solution.

Overall, the MDRKCAT methods compare well with the DG solutions. A large discrepancy can be noticed in the manner at which the expected convergence order is achieved; the MDRKCAT methods gradually head toward order $\min(2p,q)$ with each refinement, whereas the DG solvers achieve convergence already going from 32 to 64 cells. This is to be expected: By definition the MDRKCAT methods only approximate the time derivatives of the flux $\partial^{k-1}_t {f}({w})$. Hence the achieved accuracy is intertwined with the mesh resolution of the problem at hand. For a lower amount of cells $M$ the numerical flux $F^n_{i+1/2}$ at the faces can thus not be an accurate representation, whereas the DG solvers calculate the fluxes at the half-way points $i+1/2$ on the basis of the exact flux ${f}({w})$. As soon as enough cells $M$ are used to finely represent the initial data, full advantage can be taken of the CAT method.
 
Moreover, the difference between the methods should be brought into perspective by studying the amount of effective \textit{spatial} degrees of freedom (DOF) and the effective spatial size that influences the order of accuracy. DG methods make use of numerical integration points on each cell for the integration of the solution variable multiplied with the chosen basis functions \cite{Cockburn2000}. This illustrates why the DG solvers more quickly capture the expected convergence order and why a direct comparison of the DG schemes and the MDRKCAT is difficult in Fig.~\ref{fig:convPlot_tfinal0.2_cfl0.15_euler_sine-system}.
The actual amount of spatial DOF used by $\methodcat{\mathtt{r}}{q}{\mathtt{s}}$ schemes is $\lceil q/2 \rceil M$; each node uses it's own local stencil in the calculations. However, as explained in \cite{CarrilloPares2019}, the local stencils are merely a manner to assure that the CAT methods linearly reduce back to Lax-Wendroff schemes. The same accuracy is achieved by the approximate Taylor methods in \cite{ZorioEtAl} of which the CAT procedure is established. Summing up, we can conclude that the novel $\methodcat{\mathtt{r}}{q}{\mathtt{s}}$ schemes compare well with a state-of-the-art DG solver in terms of accuracy.


\section{Conclusion and outlook}\label{sec:conclusion}
In this paper we have formulated a family of Jacobian-free multistage multiderivative solvers for hyperbolic conservation laws, so-called MDRKCAT methods. Following the Compact Approximate Taylor (CAT) method in \cite{CarrilloPares2019}, instead of computing the exact flux derivative expressions, local approximations for the time derivatives of the fluxes are  obtained recursively.  There are many advantages by virtue of this procedure: no costly symbolic computations are needed; and we hope that many multiderivative Runge-Kutta (MDRK) schemes can now actually be of practical use.

Both theoretically and numerically it is proven that the desired convergence order $\min(2p,q)$ is achieved, $2p$ being the spatial order and $q$ the temporal order. Universally among the different test cases the spatial accuracy is seen to be dominant. A comparison with SSP-DG methods for the Euler equations shows that MDRKCAT methods compare well with state-of-the-art schemes {in terms of accuracy}. 

A von Neumann analysis revealed that the stability of the MDRKCAT methods depends heavily on the number of stages and the underlying high-order Lax-Wendroff method. The latter one solely utilizes centered differences for the spatial discretization. Consequently, odd-derivative Runge-Kutta schemes seem less adequate in conjuction with the CAT algorithm. 

{In the future, there are two main routes to follow: extend and apply the scheme to more challenging settings and to further examine the stability properties of the novel scheme. Concerning more challenging settings the investigation of}
multidimensional hyperbolic conservation laws with (possibly) unstructured meshes and parabolic PDEs with viscous effects are attractive. In order to accomplish such extensions it might be interesting to combine MDRKCAT methods with DG techniques \cite{SSJ2017}. Presumably, also implicit MDRK schemes need to be considered to take care of the diffusive effects. {A possible starting point could be the implicit variant of the approximate Taylor methods, which have been recently developed for ODEs in~\cite{2020_Baeza_EtAl}.}
Concerning the stability properties of the scheme one could think of exploring more types of MDRK schemes, possibly with SSP properties \cite{Seal2015b,2021_Gottlieb_EtAl}.
Moreover, at the same time, it will be possible to identify more efficient schemes. 

\section*{Declarations}
\textit{Funding} J. Zeifang was funded by the Deutsche Forschungsgemeinschaft (DFG, German Research Foundation) through
the project no. 457811052. The HPC-resources and services used in this work were provided by the VSC (Flemish Supercomputer Center), funded by the Research Foundation - Flanders (FWO) and the Flemish Government.\\
\\
\textit{Conflicts of interest} The authors declare that they have no known competing financial interests or personal relationships that could have appeared to influence the work reported in this paper. \\
\\
\textit{Availability of data and material} The datasets generated and/or analyzed during the current study are available from the corresponding author on reasonable request \url{jeremy.chouchoulis@uhasselt.be}.\\
\\
\textit{Code availability} The code used to generate the results in this work is available upon reasonable request from the corresponding author \url{jeremy.chouchoulis@uhasselt.be}.

\appendix

\section{Butcher tableaux}\label{app:ButherTableaux}
In this section, we show the multiderivative Runge-Kutta methods used in this work through their Butcher tableaux. We use three two-derivative methods taken from \cite{TC10}, see Tbl.~\ref{tab:TDRK3-2s}--\ref{tab:TDRK5-3s}; two three-derivative methods taken from \cite{TurTur2017}, see Tbl.~\ref{tab:ThDRK5-2s}--\ref{tab:ThDRK7-3s}; and one four-derivative method, constructed for this paper, see Tbl.~\ref{tab:FDRK6-2s}. This last scheme has been derived from the idea that it should be of form 
\begin{subequations}\label{eq:specificMDRK}
\begin{equation*}\label{eq:specificMDRK-stages}
y^{n,{\color{BlueGreen}l}} = y^n + \sum\limits_{k=1}^{\mathtt{r}-1} \frac{\left(c_{\color{BlueGreen}l} {\Delta t}\right)^{k}}{k!} \Phi^{(k-1)}\left(y^{n}\right) + {\Delta t}^{\mathtt{r}}\sum\limits_{{\color{VioletRed}\nu}=1}^{{\color{BlueGreen}l}-1} a_{{\color{BlueGreen}l}{\color{VioletRed}\nu}}^{(\mathtt{r})} \Phi^{(\mathtt{r}-1)}\left(y^{n,{\color{VioletRed}\nu}} \right) \, ,
\end{equation*}
for ${\color{BlueGreen}l} = 1,\dots,\mathtt{s}$, with update
\begin{equation*}\label{eq:specificMDRK-update}
y^{n+1} = y^n + \sum\limits_{k=1}^{\mathtt{r}-1} \frac{{\Delta t}^{k}}{k!} \Phi^{(k-1)}\left(y^{n}\right) + {\Delta t}^{\mathtt{r}}\sum\limits_{{\color{BlueGreen}l}=1}^{\mathtt{s}} b^{(\mathtt{r})}_{\color{BlueGreen}l} \Phi^{(\mathtt{r}-1)}(y^{n,{\color{BlueGreen}l}}) \, . \hspace{2em}
\end{equation*}
\end{subequations}
These forms have also been used in \cite{TC10} and \cite{TurTur2017}.

\begin{table}[h!]
\centering
\caption{$\method{2}{3}{2}$ - Third order two-derivative Runge-Kutta scheme using two stages \cite{TC10}.}
\label{tab:TDRK3-2s}
\begin{tabular}{c|cc|cc}
$0$ & $0$   & $0$   & $0$   & $0$ \\
$1$ & $1$   & $0$   & $1/2$ & $0$ \\ \hline
  & $2/3$ & $1/3$ & $1/6$ & $0$
\end{tabular}
\end{table}

\begin{table}[h!]
\centering
\caption{$\method{2}{4}{2}$ - Fourth order two-derivative Runge-Kutta scheme using two stages \cite{TC10}.}
\label{tab:TDRK4-2s}
\begin{tabular}{c|cc|cc}
$0$ & $0$   & $0$   & $0$   & $0$ \\
$1/2$ & $1/2$   & $0$   & $1/8$ & $0$ \\ \hline
  & $1$ & $0$ & $1/6$ & $1/3$
\end{tabular}
\end{table}

\begin{table}[h!]
\centering
\caption{$\method{2}{5}{3}$ - Fifth order two-derivative Runge-Kutta scheme using three stages \cite{TC10}.}
\label{tab:TDRK5-3s}
\begin{tabular}{c|ccc|ccc}
$0$ & $0$   & $0$  & $0$   & $0$ & $0$   & $0$ \\
$2/5$ & $2/5$ & $0$  & $0$ & $2/25$ & $0$ & $0$ \\
$1$ & $1$ & $0$  & $0$ & $-1/4$ & $3/4$ & $0$ \\  \hline
  & $1$ & $0$  & $0$ & $1/8$ & $25/72$ & $1/36$ \\ 
\end{tabular}
\end{table}

\begin{table}[h!]
\centering
\caption{$\method{3}{5}{2}$ - Fifth order three-derivative Runge-Kutta scheme using two stages \cite{TurTur2017}.}
\label{tab:ThDRK5-2s}
\begin{tabular}{c|cc|cc|cc}
$0$ & $0$   & $0$   & $0$   & $0$ & $0$   & $0$ \\
$2/5$ & $2/5$   & $0$   & $2/25$ & $0$  & $4/375$ & $0$ \\ \hline
  & $1$ & $0$ & $1/2$ & $0$ & $1/16$ & $5/48$
\end{tabular}
\end{table}

\begin{table}[h!]
\centering
\caption{$\method{3}{7}{3}$ - Seventh order three-derivative Runge-Kutta scheme using three stages \cite{TurTur2017}. The coefficients are given by $c_2 = \frac{3 - \sqrt{2}}{7}$, $c_3 = \frac{3 + \sqrt{2}}{7}$, $a_{32}^{(3)} = \frac{122 + 71\sqrt{2}}{7203}$, $b_1^{(3)} = \frac{1}{30}$, $b_2^{(3)} = \frac{1}{15} + \frac{13\sqrt{2}}{480}$, $b_3^{(3)} = \frac{1}{15} - \frac{13\sqrt{2}}{480}$. }
\label{tab:ThDRK7-3s}
\begin{tabular}{c|ccc|ccc|ccc}
$0$ & $0$   & $0$ & $0$   & $0$   & $0$ & $0$  & $0$   & $0$ & $0$  \\
$c_2$ & $c_2$ & $0$  & $0$   & $c_2^2/2$ & $0$  & $0$  & $c_2^3/6$ & $0$ & $0$  \\
$c_3$ & $c_3$   & $0$ & $0$   & $c_3^2/2$ & $0$ & $0$  & $c_2^3/6 - a_{32}^{(3)}$ & $a_{32}^{(3)}$ & $0$ \\[1pt] \hline
  & $1$ & $0$ & $0$  & $1/2$ & $0$ & $0$  & $b_1^{(3)}$ & $b_2^{(3)}$ & $b_3^{(3)}$
\end{tabular}
\end{table}

\begin{table}[h!]
\centering
\caption{$\method{4}{6}{2}$ - Sixth order four-derivative Runge-Kutta scheme using two stages.}
\label{tab:FDRK6-2s}
\begin{tabular}{c|cc|cc|cc|cc}
$0$ & $0$   & $0$   & $0$   & $0$ & $0$   & $0$ & $0$   & $0$\\
$1/3$ & $1/3$   & $0$   & $1/18$ & $0$  & $1/162$ & $0$ & $1/1944$   & $0$\\ \hline
  & $1$ & $0$ & $1/2$ & $0$ & $1/6$ & $0$  & $1/60$ & $1/40$
\end{tabular}
\end{table}

\FloatBarrier

\bibliographystyle{abbrv}            
\bibliography{ListPaper}            

\end{document}